\newcommand{\into}{\hookrightarrow}
\newcommand{\C}{\mathbb{C}}
\newcommand{\R}{\mathbb{R}}
\newcommand{\Z}{\mathbb{Z}}
\newcommand{\N}{\mathbb{N}}
\newcommand{\G}{\Gamma}
\newcommand{\T}{\mathbb{T}}
\newcommand{\Cl}{\mathbb{C}{\rm l}}
\newcommand{\aHom}{{\rm Hom}_{\rm alg}}
\newcommand{\rees}{{\rm Rees}}
\newcommand{\corr}[1]{\overset{#1}{\longrightarrow}}
\newcommand{\la}{\langle}
\newcommand{\ra}{\rangle}
\newcommand{\bd}{\partial}
\newcommand{\sset}{\subseteq}
\newcommand*{\van}[1]{\mathcal{I}_{#1}}
\newcommand{\nuw}{\nu_\mathcal{W}}
\newcommand{\defw}{\delta_\mathcal{W}}
\newcommand{\gr}{{\rm gr}}
\newcommand{\DO}{{\rm DO}}
\newcommand{\WT}[1]{\widetilde{#1}}
\numberwithin{equation}{section}
\numberwithin{equation}{section}
\theoremstyle{theorem}
\newtheorem{theorem}[equation]{Theorem}
\newtheorem{lemma}[equation]{Lemma}
\newtheorem{proposition}[equation]{Proposition}
\newtheorem{corollary}[equation]{Corollary}
\theoremstyle{definition}
\newtheorem{definition}[equation]{Definition}
\theoremstyle{remark}
\newtheorem{remark}[equation]{Remark}
\newtheorem{remarks}[equation]{Remarks}
\newtheorem{example}[equation]{Example}
\begin{document}
	\title[Linear weightings]{Linear weightings and deformations of vector bundles}

	\author{Daniel Hudson}
	\email{dhudson@math.toronto.edu}
	\address{Department of Mathematics\\
	University of Toronto\\
	40 St. George Street\\
	Toronto, ON\\
	Canada M5S 2E4}

	
	\date{\today}
	
	
	\maketitle
	
	\begin{abstract}
            We define and explore the notion of \emph{linear weightings} for vector bundles, extending the recent work by Loizides and Meinrenken in~\cite{LOIZ-MEIN2020}. We construct weighted normal bundles and deformation spaces in the category of vector bundles. We explain how a linear weighting determines a filtration on differential operators, and an interpolation between a differential operator and its ``weighted" linearization. We close by explaining how our constructions capture the rescaled spinor bundle of Higson and Yi, constructed in~\cite{higson2019spinors}, and a related construction by \v{S}evera in~\cite{vsevera2017letters}. 
	\end{abstract}

 \section{Introduction}

The purpose of this paper is to provide a new perspective on the rescaled spinor bundle of Higson and Yi ~\cite{higson2019spinors}, from the point of view of weightings recently introduced by Loizides and Meinrenken. We accomplish this by introducing the notion of \emph{linear weightings} of vector bundles, modelled on the definition of a weighting given by Loizides and Meinrenken in~\cite{LOIZ-MEIN2020}.   

Let us briefly recall the rescaled spinor bundle of Higson and Yi. Suppose that $M$ is a Riemannian spin manifold with spinor bundle $S\to M$. In order to understand Getzler's approach (\cite{berline2003heat}, ~\cite{getzler1983pseudodifferential}) to the index theorem from the perspective Connes' tangent groupoid (\cite{connes1994}), Higson and Yi introduce the \emph{rescaled spinor bundle} $\mathbb{S}$, which is a vector bundle over the tangent groupoid $\T M = TM \sqcup (M\times M\times \R^\times)$. It can be understood as a family of vector bundles parameterized by $\R$, given by 
    \begin{equation*}
        \mathbb{S}_t = \left\{
            \begin{array}{ll}
                S\boxtimes S^* & t\neq 0  \\
                \pi^*(\wedge^\bullet TM) & t = 0, 
            \end{array}
        \right.
    \end{equation*}
where $\pi:TM \to M$ is the vector bundle projection and $S\boxtimes S^*$ is the vector bundle over $M\times M$ with fibre $(S\boxtimes S^*)_{(m_1,m_2)} = S_{m_1}\otimes S^*_{m_2}$. 

The rescaled spinor bundle can be thought as a deformation space. Deformation spaces were studied in depth by Loizides and Meinrenken in their recent work on \emph{weightings} (\cite{LOIZ-MEIN2020}). Roughly speaking, a weighting of a manifold $M$ is given by assigning weights to local coordinates. This manifests as a multiplicative filtration 
    \begin{equation}
    \label{IntroEquation: weighting}
        C^\infty_M = C^\infty_{M,(0)} \supseteq C^\infty_{M,(1)} \supseteq C^\infty_{M,(2)} \supseteq \cdots 
    \end{equation}
of the sheaf of smooth functions on $M$. Loizides and Meinrenken show that $C^\infty_{M,(1)}$ is always the vanishing ideal of some closed submanifold $N\sset M$, and the weighting of $M$ determines a fibre bundle over $N$,
    \[ \nuw(M,N) \to N, \]
called the \emph{weighted normal bundle}. Loizides and Meinrenken also define a \emph{weighted deformation space} $\defw(M,N)$, admitting a set-theoretic decomposition 
    \[ \defw(M,N) = \nuw(M,N) \sqcup (M\times \R^\times); \]
see section~\ref{subsection: preliminaries} for a brief review of their constructions. For example, if the filtration~\eqref{IntroEquation: weighting} is given by order of vanishing along the diagonal $M \into M\times M$, then $\defw(M\times M, M)$ is the tangent groupoid $\T M$. 

We unify the constructions of the rescaled spinor bundle $\mathbb{S}$ and the weighted deformation space $\defw(M,N)$ by introducing \emph{linear weightings}. In addition to providing conceptual clarity, this gives a more systematic approach to constructing deformations of vector bundles. It also allows for one to take into account more exotic geometry on the base, for instance Carnot manifolds (\cite{van2017tangent}) or manifolds with boundary (\cite{melrose1993atiyah}).

Let us outline this article. We begin by defining a linear weighting of a vector bundle $V\to M$. This is given by a weighting of $M$ and a $\Z$-graded module filtration 
    \begin{equation}
    \label{IntroEquation: linear weighting}
        \cdots \supseteq \G(V)_{(i)} \supseteq \G(V)_{(i+1)} \supseteq \cdots 
    \end{equation}
of the sheaf of sections of $V$, satisfying some local assumptions.  We explain how a linear weighting of $V$ determines linear weightings of $V^*$, $\wedge^k V$, ${\rm Sym}^k V$, et cetera. We then show that a linear weighting of $V\to M$ is equivalent to a multiplicative filtration 
    \[ \cdots \supseteq C^\infty_{pol}(V)_{(i)} \supseteq C^\infty_{pol}(V)_{(i+1)} \supseteq \cdots \]
of the sheaf of polynomial functions on $V$. We explain how Loizides and Meinrenken's constructions of $\nuw(M,N)$ and $\defw(M,N)$ generalize to linear weightings, giving vector bundles 
    \[ \nuw(V) \to \nuw(M,N) \quad \text{and} \quad \defw(V) \to \defw(M,N)  \]
such that $\defw(V)$ can be understood as a family of vector bundles 
    \begin{equation*}
     \xymatrix{
        \defw(V) = \nuw(V) \sqcup (V \times \R^\times) \ar[d] \\
        \defw(M,N) = \nuw(M,N) \sqcup (M\times \R^\times).
    }
    \end{equation*} 
We explain in Section~\ref{section: examples in the literature} how our construction captures the rescaled spinor bundle of Higson and Yi, as well as a related construction by \v{S}evera in his letters to Weinstein (\cite{vsevera2017letters}). 

We expect that our work, combined with the characterization of pseudo-differential operators recently obtained by van Erp and Yuncken in~\cite{van2019groupoid}, can be used to define interesting calculi of operators acting on sections of vector bundles. We intend to pursue this direction in future work. 

\subsection*{Acknowledgements}

The author wishes to thank Gabriel Beiner, Yiannis Loizides, and Eckhard Meinrenken for their helpful discussions and comments, especially regarding \v{S}evera's algebroid. The author would also like to thank the University of Victoria, and especially Jane Butterfield, for giving him an office to work in during the COVID-19 pandemic. This research was supported by an NSERC CGS-D. 

\section{Linear weightings}
\label{section: linear weightings}

\subsection{Preliminaries}
\label{subsection: preliminaries}

We being by fixing our conventions for vector bundles and by giving a brief introduction to the theory of weightings. Throughout this article $M$ always denotes a $C^\infty$-manifold, unless explicitly said otherwise. 

\subsubsection{Vector bundles}

In this note all vector bundles $V\to M$ will be smooth and finite rank. We let $\G(V)$ denote the smooth sections of $V$. We write $C^\infty_{[n]}(V)$ to denote the space of smooth functions on $V$ which are homogeneous of degree $n$. There are canonical isomorphisms 
    \[ C^\infty_{[0]}(V) \cong C^\infty(M) \quad \text{and} \quad C^\infty_{[1]}(V) = \G(V^*) \]
which we make use of without comment. The space of polynomials on $V$ is the direct sum 
    \[ C^\infty_{pol}(V) = \bigoplus_{n\geq 0} C^\infty_{[n]}(V). \]
By \emph{vector bundle coordinates} we mean a coordinate system $x_a, y_b$ defined on $V|_U$ for some open set $U\sset M$, where $x_a \in C^\infty_{[0]}(V|_U)$ and $y_b \in C^\infty_{[1]}(V|_U)$.

By a \emph{subbundle} of $V$ we mean a submanifold $W\sset V$ with the property that for each $w\in W\cap V|_U$ there exist vector bundle coordinates $x_a \in C^\infty_{[0]}(V|_U)$ and $y_b \in C^\infty_{[1]}(V|_U)$ so that 
    \[ W\cap V|_U =  \{x_1 = \cdots = x_n = y_1 = \cdots = y_r = 0\}. \]
Equivalently, we may define subbundles as submanifolds of $V$ which are invariant under scalar multiplication (\cite{grabowski2009higher}). In particular, a subbundle $W\sset V$ is itself a vector bundle whose base is a (potentially proper) submanifold of $M$; in the case when the base of $W$ is all of $M$, we refer to $W$ as a \emph{wide} subbundle. If $W\to N$ is a subbundle of $V\to M$, then we write 
    \[ \G(V, W) = \{ \sigma \in \G(V) : \sigma|_N \in W \}.  \]

\subsubsection{Review of weightings}

Let $w = (w_1, \dots, w_m)\in \Z^m_{\geq 0}$.  A weighting of $M$ is a multiplicative filtration 
    \begin{equation}
    \label{equation: LM defn of weighting}
        C^\infty_M = C^\infty_{M,(0)} \supseteq C^\infty_{M,(1)} \supseteq C^\infty_{M,(2)} \supseteq \cdots
    \end{equation}
of the sheaf of smooth functions on $M$ with the property that locally one can find coordinates $x_1, \dots, x_m \in C^\infty(U)$ so that $C^\infty(U)_{(i)}$ is the ideal generated by the monomials 
    \[ x^\alpha = x_1^{\alpha_1} \cdots x_m^{\alpha_m} \quad \text{with } \alpha \cdot w = \sum_{j}\alpha_jw_j \geq i;\]
we refer to $w$ as the \emph{weight vector}, or \emph{weights}, for $M$, and the local coordinates $x_a$ as \emph{weighted coordinates}. By~\cite[Lemma 2.4]{LOIZ-MEIN2020}, $C^\infty_{M,(1)}$ is the vanishing ideal of a closed submanifold $N\sset M$; if $N$ is given in advance we say that $M$ \emph{is weighted along $N$} and refer to $(M,N)$ as a \emph{weighted pair}. A weighted morphism $\varphi:(M,N)\to (M',N')$ is a smooth map whose pullback is filtration preserving. 

Since we are working in the $C^\infty$-category, we may take advantage of the existence of partitions of unity to avoid the use of sheaves, working instead with the filtration of global functions 
        \[ C^\infty(M) = C^\infty(M)_{(0)} \supseteq C^\infty(M)_{(1)} \supseteq C^\infty(M)_{(2)} \supseteq \cdots . \]

Motivated by the algebro-geometric description of the tangent groupoid due to Haj and Higson (\cite{sadegh2018euler}), Loizides and Meinrenken associate a \emph{weighted normal bundle} and \emph{weighted deformation space} to a weighted manifold $M$. The weighted normal bundle associated to the weighted pair $(M,N)$ is the character spectrum 
    \[ \nuw(M,N) = \aHom(\gr(C^\infty(M)), \R),  \]
where\footnote{We let $C^\infty(M)_{(i)} = C^\infty(M)$ for $i\leq 0$.} 
    \[\gr(C^\infty(M)) = \bigoplus_{i\in \Z} C^\infty(M)_{(i)}/ C^\infty(M)_{(i+1)}.  \]
\cite[Theorem 4.2]{LOIZ-MEIN2020} states that there is a unique $C^\infty$-structure on $\nuw(M,N)$ such that any class $[f] \in \gr(C^\infty(M))$ defines, by evaluation, a smooth map $\nuw(M,N) \to \R$. The weighted deformation space is the character spectrum 
    \[ \defw(M,N) = \aHom(\rees(C^\infty(M)), \R),   \]
where 
    \[ \rees(C^\infty(M)) = \left\{ \sum_{i\in \Z} f_iz^{-i} : f_i \in C^\infty(M)_{(i)}\right\} \sset C^\infty(M)[z,z^{-1}]  \]
is the Rees algebra of the filtered algebra $C^\infty(M)$. \cite[Theorem 4.2]{LOIZ-MEIN2020} states that $\defw(M,N)$ has a unique $C^\infty$-structure such that, for any $f\in C^\infty(M)_{(i)}$, evaluation at $fz^{-i}$ defines a smooth map $\defw(M,N)\to \R$. In particular, evaluation at $z \in \rees(C^\infty(M))$ defines a surjective submersion $\pi_\delta : \defw(M,N) \to \R$, and the fibres of $\pi_\delta$ are given by  
        \[ \pi_\delta^{-1}(t) = \left\{
        \begin{array}{ll}
            M & t\neq 0,  \\
            \nuw(M,N) & t = 0.  
        \end{array}
    \right.\]

\subsection{Definition of linear weightings}

A linear weighting of a vector space $V$ is a filtration of $V$ by subspaces. Therefore, we define a weighted vector bundle to be one which is locally the product of a weighted manifold with a filtered vector space. This motivates the following definition. 

Let $(M,N)$ be a weighted pair and $M \times \R^k\to M$ the trivial bundle of rank $k$ over $M$. A \emph{vertical weight vector} for $M \times \R^k$ is a $k$-tuple of integers $(v_1, \dots, v_k) \in \Z^k$. A choice of vertical weight vector determines a filtration of $\G(M \times \R^k)$ by $C^\infty(M)$-submodules
    \begin{equation}
    \label{equation: local model for linear weighting}
        \G(M\times \R^k)_{(i)} = \sum_{a=1}^k C^\infty(M)_{(i-v_a)}\sigma_a, 
    \end{equation}
where $\sigma_1, \dots, \sigma_k$ is the standard basis for $\R^k$.  
	
\begin{definition}
\label{definition: linear weighting}
    A \emph{linear weighting} of a rank $k$ vector bundle $V$ over the weighted pair $(M,N)$ is a $\Z$-graded filtration 
        \[ \cdots \supseteq \G_{V, (i)} \supseteq \G_{V,(i+1)} \supseteq \cdots \]
    of the sheaf of sections $\G_V$ by $C^\infty_M$-submodules such that for every point $p\in M$ there exists an open neighbourhood $U\sset M$ containing $p$ and a frame $\sigma_1, \dots, \sigma_k \in \G(V|_U)$ such that $\G(V|_U)_{(i)}$ is given by~\eqref{equation: local model for linear weighting}. The frame $\sigma_a$ is called a \emph{weighted frame}. We refer to a vector bundle with a linear weighting as a \emph{weighted vector bundle}.
\end{definition}

\begin{remarks}
    \begin{enumerate}
        \item If $v_1, \dots, v_k$ is the vertical weight sequence for $V$, then  $\G_{V, (i)} = \G_V$ for $i \leq \min_a\{v_a\}$.

        \item  By definition, $\G_V$ is a filtered module over the filtered algebra $C^\infty_M$. That is, for all $i, j \in \Z$,
            \[ C^\infty_{M,(i)}\cdot \G_{V, (j)} \sset \G_{V, (i+j)}.\]

        \item Using that that we are working in the $C^\infty$-category again, and work instead with the filtration of global sections  
            \[ \cdots \supseteq \G(V)_{(i)} \supseteq \G(V)_{(i+1)} \supseteq \cdots \]
    \end{enumerate}
\end{remarks}

\begin{proposition}
\label{proposition: linear weighting determines filtration by subbundles}
    Let $(M,N)$ be a weighted pair. A linear weighting of $V\to M$ with vertical weights $v_1, \dots, v_k$ determines a filtration of $V|_N$  
        \[ \cdots \supseteq (V|_N)_{(i)} \supseteq (V|_N)_{(i+1)} \supseteq \cdots  \]
    by wide subbundles $(V|_N)_i$ of rank $k_i = \#\{ a : v_a \geq i\}$ with the property that $\G((V|N)_i)$ is given by the image of $\G(V)_{(i)}$ in 
        \[\G(V)/(\van{N}\cdot \G(V)) = \G(V|_N) \]
    under the quotient map. 
\end{proposition}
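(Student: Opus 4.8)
The plan is to define the candidate subbundle through its module of sections and then verify locally, in a weighted frame, that this module is the section module of a wide subbundle of the asserted rank. Recall from \cite[Lemma 2.4]{LOIZ-MEIN2020} that $\van{N} = C^\infty(M)_{(1)}$, so that $\G(V|_N) = \G(V)/(\van{N}\cdot \G(V))$ is the quotient of $\G(V)$ by the sections vanishing to first weighted order along $N$. I would define $(V|_N)_{(i)}$ to be the subset of $V|_N$ whose module of sections is the image of $\G(V)_{(i)}$ under the restriction (quotient) map $q\colon \G(V)\onto \G(V|_N)$; this image is manifestly intrinsic, so the only content of the statement is that it is the section module of a wide subbundle of rank $k_i$.

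First I would compute $q(\G(V)_{(i)})$ in a weighted frame. Fix $p \in N$ and a weighted frame $\sigma_1, \dots, \sigma_k \in \G(V|_U)$ with vertical weights $v_1, \dots, v_k$, so that $\G(V|_U)_{(i)} = \sum_a C^\infty(U)_{(i-v_a)}\sigma_a$ by~\eqref{equation: local model for linear weighting}. Since the filtration of $C^\infty(U)$ is decreasing with $C^\infty(U)_{(1)} = \van{N\cap U}$, every $f \in C^\infty(U)_{(j)}$ with $j \geq 1$ vanishes on $N\cap U$, whereas $C^\infty(U)_{(j)} = C^\infty(U)$ for $j \leq 0$ by convention. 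Applying $q$ term-by-term, the summand indexed by $a$ dies exactly when $i - v_a \geq 1$, i.e.\ when $v_a < i$, and survives as all of $C^\infty(N\cap U)\,\sigma_a|_N$ exactly when $v_a \geq i$. Hence
\[
q\bigl(\G(V|_U)_{(i)}\bigr) = \sum_{a\,:\,v_a \geq i} C^\infty(N\cap U)\,\sigma_a|_N,
\]
the free $C^\infty(N\cap U)$-module on the subset $\{\sigma_a|_N : v_a \geq i\}$ of the frame $\sigma_1|_N, \dots, \sigma_k|_N$ of $V|_{N\cap U}$.

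This identifies $q(\G(V)_{(i)})$, locally, as a submodule spanned by part of a frame, which is precisely the defining condition for the module of sections of a wide subbundle $(V|_N)_{(i)} \sset V|_{N\cap U}$ of rank $\#\{a : v_a \geq i\} = k_i$. Because the global submodule $q(\G(V)_{(i)}) \sset \G(V|_N)$ is independent of all local choices, these local subbundles have the same underlying fibres $\{s(p) : s \in q(\G(V)_{(i)})\}$ on overlaps and assemble into a single global wide subbundle, whose filtration $(V|_N)_{(i)} \supseteq (V|_N)_{(i+1)}$ is inherited from $\G(V)_{(i)} \supseteq \G(V)_{(i+1)}$. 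The one point needing care is well-definedness of the rank: a priori the vertical weights attached to two overlapping weighted frames could differ, but since the fibre dimension of $(V|_N)_{(i)}$ is intrinsic, the count $\#\{a : v_a \geq i\}$ is forced to be locally constant. This reconciliation of the local models is the only nontrivial point, and it follows automatically from the intrinsic description of the image; I therefore expect the main (and mild) obstacle to be bookkeeping this consistency rather than any substantial difficulty, the local computation being immediate.
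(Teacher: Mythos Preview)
Your proposal is correct and follows essentially the same approach as the paper: both arguments pick a weighted frame $\sigma_1,\dots,\sigma_k$ over $U$, observe that the image of $\G(V|_U)_{(i)}$ in $\G(V|_U)/(\van{N\cap U}\cdot\G(V|_U))$ is freely generated by $\{\sigma_a|_{N\cap U} : v_a \geq i\}$, and read off the subbundle and its rank from this. The paper's proof is terser (it states only the local computation and leaves the gluing and rank-consistency implicit), whereas you spell out those bookkeeping points, but there is no substantive difference in strategy.
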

\begin{proof}
    Let $p\in N$ be contained in the open neighbourhood $U\sset M$, and let $\sigma_a$ be a weighted frame for $V|_U$. Then the image of $\G(V|_U)_{(i)}$ in the quotient $\G(V|_U)/(\van{N\cap U}\cdot \G(V|_U))$ is freely generated by 
        \[ \{ \sigma_a|_{N\cap U} : v_a \geq i \}. \qedhere \]
\end{proof}

\begin{example}
\label{example: weighting defined by subbundles}
    There is a unique weighting of $M$ along itself; it is given by 
        \[ C^\infty(M)_{(i)} = \left\{
            \begin{array}{ll}
                C^\infty(M) & i = 0,  \\
                0 & i > 0.
            \end{array}
        \right.  \]
    If $M$ is weighted along itself, then a linear weighting of $V\to M$ is given by a filtration of $V$
        \[ \cdots \supseteq V_i \supseteq V_{i+1} \supseteq \cdots  \]
    by wide subbundles, in the sense that 
        \[ \G(V)_{(i)} = \G(V_{i}).  \]
\end{example}

\begin{example}
\label{example: tangent weighting}
    Suppose that $(M,N)$ is a weighted pair. Define $\mathfrak{X}(M)_{(i)}$ to be the vector fields that shift filtration degree of $C^\infty(M)$ by $i$; that is, $X\in \mathfrak{X}(M)_{(i)}$ if and only if 
        \[ f\in C^\infty(M)_{(j)} \implies Xf \in C^\infty(M)_{(i+j)}. \]
    If $x_a$ is a local weighted coordinate system on $M$, then the local coordinate vector fields $\frac{\bd}{\bd x_a}$ define a local weighted frame for $TM$. Thus, the filtration  
        \[ \cdots \supseteq \mathfrak{X}(M)_{(i)} \supseteq \mathfrak{X}(M)_{(i+1)} \supseteq \cdots  \]
    defines a linear weighting of $TM$.
\end{example}

\begin{example}
\label{example: trivial weighting along subbundle}
    Let $N\sset M$ be a closed submanifold with vanishing ideal $\van{N}$. The trivial weighting of $M$ along $N$ is given by order of vanishing, 
        \[ C^\infty(M)_{(j)} = \van{N}^j.  \]    
    If $W\to N$ is a subbundle of $V\to M$, then the filtration 
        \[ \G(V)_{(i)} = \left\{
            \begin{array}{ll}
                 \G(V) & i \leq -1,  \\
                \G(V, W) & i = 0, \\
                \van{N}^i\cdot \G(V) & i \geq 1, 
            \end{array}
        \right.\]
    determines a linear weighting of $V$ such that the induced filtration of $V|_N$ is given by 
        \[ V|_N \supseteq W \supseteq 0.  \]
    If $M$ is trivially weighted along $N$, this recovers the tangent weighting of $TM$ from the previous example for $W = TN$. 
\end{example}

\begin{example}
    Let $\R$ be given the trivial weighting along the origin, so that the coordinate $x$ has weight $1$. Let $V = \R \times \R^2 \corr{{\rm pr}_1} \R$ be the trivial bundle of rank 2 over $\R$, and let $\sigma_1, \sigma_2 \in \G(V)$ be standard frame. The linear weightings
	\begin{align*}
            \G(V)_{(i)} & = C^\infty(\R)_{(i)}\cdot  \sigma_1 + C^\infty(\R)_{(i+2)}\cdot \sigma_2 \\
            \G(V)'_{(i)} & = C^\infty(\R)_{(i)}\cdot (\sigma_1 + x\sigma_2) + C^\infty(\R)_{(i+2)}\cdot \sigma_2 
	\end{align*}
    determine the same filtration of $V_{0} = \{0\}\times \R^2$, but they are different weightings. Indeed, $\sigma_1+x\sigma_2 \in \G(V)'_{(0)}$, but $\sigma_1+x\sigma_2 \notin\G(V)_{(0)}$. In particular, this example shows that the filtration of $V|_N$ alone is not enough to recover the weighting of $V$.
\end{example}

\subsection{Constructions}

We now take some time to explain how various constructions with vector bundles work in the weighted setting.

\subsubsection{Dual weighting}

The dual of a weighted vector bundle is linearly weighted by 
        \[ \tau \in \G(V^*)_{(i)} \iff \forall \sigma\in \G(V)_{(j)},\ \la \tau, \sigma \ra \in C^\infty(M)_{(i+j)}. \]
If $\sigma_a$ is a weighted frame for $V|_U$ then the corresponding dual frame $\tau_a$ is a weighted frame for $V^*|_U$. In particular, If $v_a$ are the vertical weights for $V$ then $-v_a$ are the vertical weights for $V^*$ and $V=(V^*)^*$ as weighted vector bundles. Furthermore, the corresponding filtration of $V^*|_N$ is given by 
    \[ (V^*|_N)_{i} = {\rm ann}((V|_N)_{-i+1}). \]

\begin{example}
    If $(M,N)$ is a weighted pair then $T^*M$ obtains a natural linear weighting given by the dual weighting of $TM$.
\end{example}

\subsubsection{Direct sums, tensor products, etc.}

If $V\to M$ and $W\to M$ are weighted vector bundles over the weighted pair $(M, N)$, then $V\oplus W$, $V\otimes W$, ${\rm Hom}(V,W)$, $\wedge^k V$, and ${\rm Sym}^k(V)$ all inherit linear weightings in a canonical way. 

For example, the linear weighting on $V\otimes W$ is given by 
    \[ \G(V\otimes W)_{(k)} = \sum_{i+j = k}\G(V)_{(i)} \otimes_{C^\infty(M)} \G(W)_{(j)}.   \]
The linear weighting on ${\rm Hom}(V, W)$ is given by the identification ${\rm Hom}(V,W) \cong V^*\otimes W$. With respect to this weighting we have that, for all $i, j \in \Z$,
    \[ \G({\rm Hom}(V, W))_{(i)} \times \G(V)_{(j)} \to \G(W)_{(i+j)}. \]

\subsubsection{Shifted weighting} Given a linear weighting of $V\to M$ we denote by $V[k]$ the vector bundle $V$ linearly weighted by 
    \[ \G(V[k])_{(i)} = \G(V)_{(i+k)}. \]

\begin{example}
    For any linearly weighted $V$ we have that 
        \[ V[k]^* = V^*[-k].  \] 
\end{example}

\subsubsection{Pullback bundles}

Suppose that $(M,N)$ and $(M', N')$ are weighted pairs and that $\varphi:M' \to M$ is a weighted morphism. 

\begin{proposition}
    If $V\to M$ is a weighted vector bundle, then 
        \begin{equation}
        \label{equation: pullback weighting}   
            \G(\varphi^*V)_{(i)} = \sum_{j\geq 0}C^\infty(M')_{(j)}\cdot \varphi^*\G(V)_{(i-j)}. 
        \end{equation} 
    defines a linear weighting of the pullback bundle $\varphi^*V\to M'$. 
\end{proposition}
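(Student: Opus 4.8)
The plan is to verify the three requirements of Definition~\ref{definition: linear weighting}: that \eqref{equation: pullback weighting} defines a decreasing filtration of $\G(\varphi^*V)$ by $C^\infty(M')$-submodules, and that near every point this filtration takes the standard local form \eqref{equation: local model for linear weighting} relative to a suitable frame. Since \eqref{equation: pullback weighting} is written without reference to any frame, well-definedness is not in question, so only these structural properties must be checked. The module and nesting properties are formal: each $\G(\varphi^*V)_{(i)}$ is a $C^\infty(M')$-submodule as a sum of such; the inclusion $\G(\varphi^*V)_{(i+1)}\sset\G(\varphi^*V)_{(i)}$ holds term by term because $\varphi^*\G(V)_{(i+1-j)}\sset\varphi^*\G(V)_{(i-j)}$; and the filtered-module property $C^\infty(M')_{(\ell)}\cdot\G(\varphi^*V)_{(i)}\sset\G(\varphi^*V)_{(i+\ell)}$ follows from multiplicativity of the weighting on $M'$ together with the reindexing $j\mapsto j+\ell$.

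The substance is local triviality. I would fix $p'\in M'$, put $p=\varphi(p')$, and choose a weighted frame $\sigma_1,\dots,\sigma_k$ for $V$ on a neighbourhood $U$ of $p$ with vertical weights $v_a$, so that $\G(V|_U)_{(m)}=\sum_a C^\infty(U)_{(m-v_a)}\sigma_a$. On a neighbourhood $U'\sset\varphi^{-1}(U)$ of $p'$ the sections $\varphi^*\sigma_1,\dots,\varphi^*\sigma_k$ form a frame of $\varphi^*V$, and the claim is that it is a weighted frame with the \emph{same} vertical weights $v_a$. Substituting the local description of $\G(V|_U)_{(i-j)}$ into \eqref{equation: pullback weighting} and using $\varphi^*(g\sigma_a)=\varphi^*(g)\,\varphi^*\sigma_a$, the verification collapses, after factoring out each $\varphi^*\sigma_a$, to the scalar identity
    \[ \sum_{j\geq 0} C^\infty(U')_{(j)}\cdot\varphi^*\bigl(C^\infty(U)_{(m-j)}\bigr)=C^\infty(U')_{(m)},\qquad m=i-v_a. \]
Granting this, one obtains $\G(\varphi^*V|_{U'})_{(i)}=\sum_a C^\infty(U')_{(i-v_a)}\cdot\varphi^*\sigma_a$, which is exactly the local model \eqref{equation: local model for linear weighting}.

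The heart of the argument, and the step I expect to be most delicate, is this scalar identity. The inclusion $\sset$ is immediate: since $\varphi$ is a weighted morphism its pullback is filtration preserving, so $\varphi^*\bigl(C^\infty(U)_{(m-j)}\bigr)\sset C^\infty(U')_{(m-j)}$, and multiplicativity on $M'$ gives $C^\infty(U')_{(j)}\cdot C^\infty(U')_{(m-j)}\sset C^\infty(U')_{(m)}$. For the reverse inclusion I would exploit the convention $C^\infty_{(\ell)}=C^\infty$ for $\ell\leq 0$ together with the fact that $1=\varphi^*(1)$ lies in the image of $\varphi^*$: taking $j=\max\{m,0\}$ makes $C^\infty(U)_{(m-j)}$ the full function sheaf, whose pullback contains $1$, so the corresponding summand already equals $C^\infty(U')_{(j)}=C^\infty(U')_{(m)}$. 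It is precisely the presence of the genuinely larger coefficient spaces $C^\infty(U')_{(j)}$ for $j>0$ in \eqref{equation: pullback weighting} that repairs the failure of $\varphi^*$ to be surjective onto each filtration piece, which is why the definition must include the sum over $j$ rather than only the $j=0$ term. Finally, the passage from this local statement to the global one is handled by the partitions-of-unity reduction already in force, completing the verification.
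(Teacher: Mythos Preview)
Your argument is correct and follows essentially the same route as the paper: reduce to a local weighted frame $\sigma_a$ for $V$, pull it back, and verify the scalar identity $\sum_{j\geq 0} C^\infty(U')_{(j)}\cdot\varphi^*C^\infty(U)_{(m-j)}=C^\infty(U')_{(m)}$ by using that $\varphi$ is weighted for one inclusion and by taking $j=\max\{m,0\}$ (equivalently $j=i-v_a$ when $m\geq 0$) for the other. The only additions on your side are the explicit checks of the formal filtration and module properties, which the paper leaves implicit.
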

\begin{proof}
    Let $U\sset M$ be open and let $\sigma_1, \dots, \sigma_k$ be a weighted frame for $V|_U$. I claim that the pullbacks $\varphi^*\sigma_a \in \G(\varphi^*(V|_{U}))$ define a weighted frame for $\varphi^*(V|_U)$. Indeed, we have that 
        \begin{align*}
            \G(\varphi^*(V)|_{\varphi^{-1}(u)})_{(i)} & = \sum_{j\geq 0} C^\infty(\varphi^{-1}(U))_{(j)}\cdot\varphi^*\G(V|_U)_{(i-j)} \\
            & = \sum_a \left( \sum_{j\geq 0}C^\infty(\varphi^{-1}(U))_{(j)}\cdot \varphi^*C^\infty(U)_{(i-j-v_a)}\right)\varphi^*\sigma_a.
        \end{align*}
    To complete the proof, we must show that
        \[ \sum_{j\geq 0}C^\infty(\varphi^{-1}(U))_{(j)}\cdot \varphi^*C^\infty(U)_{(i-j-v_a)} = C^\infty(\varphi^{-1}(U))_{(i-v_a)}; \]
    note that there is nothing to show if $i-v_a < 0$. Since $\varphi:M'\to M$ is a weighted morphism, it follows that $\varphi^*C^\infty(U)_{(i-j-v_a)} \sset C^\infty(\varphi^{-1}(U))_{(i-j-v_a)}$, hence the left hand side is contained in the right hand side. The other inclusion follows by putting $j = i-v_a$ in the sum. 
\end{proof}

\subsection{Linear weightings as filtrations of polynomial functions} 

In order to define the weighted normal bundle and weighted deformation bundle of a weighted vector bundle $V\to M$, it will be convenient to have a characterization of linear weightings in terms of polynomial functions. To this end, let $V\to M$ be a weighted vector bundle and let $U\sset M$ be open. For $n\in \N$, let 
    \[ C^\infty_{[n]}(V|_U)_{(i)} = \{ f\in C^\infty_{[n]}(V|_U) : i+nj>0,\ \sigma \in \G(V|_U)_{(j)} \implies f\circ \sigma \in C^\infty(V|_U)_{(i+nj)} \} \]
and define 
    \[ C^\infty_{pol}(V|_U)_{(i)} = \bigoplus_{n\geq 0} C^\infty_{[n]}(V|_U)_{(i)}.  \]
This defines a multiplicative filtration of the sheaf of polynomial functions on $V$ by $C^\infty_M$-submodules. Note that, by definition, 
    \begin{equation}
    \label{equation: polynomial and symmectric algebra equivalence}
        C^\infty_{[n]}(V|_U)_{(i)} = \G({\rm Sym}^n(V^*|_U))_{(i)}.
    \end{equation}
Let $w = (w_1, \dots, w_m)$ and $v = (v_1, \dots, v_k)$ be the weight and vertical weight vectors for $M$ and $V$, respectively, let $x_1, \dots x_m \in C^\infty(U)$ be weighted coordinates, and let $\sigma_1, \dots, \sigma_k \in \G(V|_U)$ be a weighted frame. If $y_1, \dots, y_k \in C^\infty_{[1]}(V|_U)$ denote the dual coordinates to the frame $\sigma_1, \dots, \sigma_k$, then the identification~\eqref{equation: polynomial and symmectric algebra equivalence} implies that $C^\infty_{pol}(V|_U)_{(i)}$ is generated as a $C^\infty(U)$-module by monomials of the form     
    \begin{equation}
    \label{equation: local model for weighting}
        x^\alpha y^\beta = x_1^{\alpha_1}\cdots x_m^{\alpha_m}y_1^{\beta_1}\cdots y_k^{\beta_k} \quad \text{with } \alpha\cdot w - \beta\cdot v \geq i;
    \end{equation}
note that the vertical coordinates $y_1, \dots, y_k$ have weights $-v_1, \dots, -v_k$.

Conversely, suppose that we have a $C^\infty(M)$-module filtration 
    \[ \cdots \supseteq C^\infty_{pol, V, (i)} \supseteq C^\infty_{pol, V, (i+1)} \supseteq \cdots  \]
of the sheaf $C^\infty_{pol, V}$ of $C^\infty_M$-modules with the property that locally one can find vector bundle coordinates $x_1, \dots, x_m \in C^\infty(U)$, $y_1, \dots, y_k\in C^\infty_{[1]}(V|_U)$ so that $C^\infty_{pol}(V|_U)_{(i)}$ is generated as a $C^\infty(U)$-module by monomials of the form     
        \begin{equation}
            x^\alpha y^\beta = x_1^{\alpha_1}\cdots x_m^{\alpha_m}y_1^{\beta_1}\cdots y_k^{\beta_k} \quad \text{with } \alpha\cdot w - \beta\cdot v \geq i.
        \end{equation}
For $U\sset M$ open, define
    \begin{equation}
    \label{equation: induced weighting on M}
        C^\infty(U)_{(i)} = C^\infty_{[0]}(V|_U) \cap C^\infty_{pol}(V|_U)_{(i)}
    \end{equation}
and
    \begin{equation}
    \label{equation: induced filtration on sections}
        \G(V|_U)_{(i)} = \{ \sigma \in \G(V|_U) : i+j > 0,\ f\in C^\infty_{[1]}(V|_U)_{(j)} \implies f\circ \sigma \in C^\infty(U)_{(i+j)}\}.
    \end{equation}
The filtrations~\eqref{equation: induced weighting on M} and~\eqref{equation: induced filtration on sections} define a weighting of $M$ and a linear weighting of $V$, respectively. These constructions are clearly inverse to one another, and so we have established the following. 

\begin{theorem}
 \label{theorem: linear weightings in terms of polynomials}
     Let $V$ be a rank $k$ vector bundle over the $m$-dimensional manifold $M$. There is a one to one correspondence between linear weightings of $V$ and multiplicative filtrations 
        \[ \cdots \supseteq C^\infty_{pol}(V)_{(i)} \supseteq C^\infty_{pol}(V)_{(i+1)} \supseteq \cdots \]
    of the sheaf of polynomial functions on $V$ with the following property: there exist tuples $(w_1, \dots, w_m) \in \Z^m_{\geq 0}$ and $(v_1, \dots, v_k) \in \Z^k$ such that for each $p\in V$, there is an open set $U\sset M$ with $p\in V|_U$ and vector bundle coordinate functions $x_a \in C^\infty_{[0]}(V|_U)$, $y_b\in C^\infty_{[1]}(V|_U)$ such that $C^\infty_{pol}(V|_U)_{(i)}$ is generated as a $C^\infty(U)$-module by the monomials 
        \[ x^sy^t = x_1^{s_1}\cdots x_m^{s_m}\cdot y_1^{t_1} \cdots y_k^{t_k} \]
    such that $s\cdot w - t\cdot v = \sum_{i=1}^ms_iw_1 - \sum_{i=1}^kt_iv_i \geq i$. 
\end{theorem}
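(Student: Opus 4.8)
The plan is to exhibit the correspondence by constructing explicit maps in both directions and then verifying that they are mutually inverse. Since both constructions are local in nature and the statement concerns sheaves, everything reduces to a computation in a single vector bundle chart $V|_U$ equipped with weighted coordinates $x_a$ and a weighted frame $\sigma_b$, so that partitions of unity allow us to pass freely between the sheaf-theoretic and global pictures.

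For the forward direction, starting from a linear weighting of $V$, I would define the filtration of polynomial functions by the spaces $C^\infty_{[n]}(V|_U)_{(i)}$ as above and take the direct sum over $n$. The key computation is the identification~\eqref{equation: polynomial and symmectric algebra equivalence}, namely $C^\infty_{[n]}(V|_U)_{(i)} = \G({\rm Sym}^n(V^*|_U))_{(i)}$, which transports the question to the induced linear weighting on the symmetric powers of the dual bundle. Because the dual weighting assigns the coordinate $y_b$ the vertical weight $-v_b$, the induced weighting on ${\rm Sym}^n(V^*|_U)$ has weighted frame the degree-$n$ monomials $y^\beta$ of vertical weight $-\beta\cdot v$, and the $C^\infty(U)$-coefficients, generated by monomials $x^\alpha$ with $\alpha\cdot w\geq i+\beta\cdot v$, produce exactly the monomials $x^\alpha y^\beta$ with $\alpha\cdot w-\beta\cdot v\geq i$. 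This establishes the local normal form~\eqref{equation: local model for weighting} and simultaneously shows that the filtration is multiplicative and preserved by the $C^\infty_M$-module structure.

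For the backward direction, starting from a multiplicative filtration of polynomial functions with the stated local normal form, I would recover the weighting of $M$ by~\eqref{equation: induced weighting on M}, intersecting with the degree-zero part, and recover the linear weighting of $V$ by~\eqref{equation: induced filtration on sections}, which reads off $\G(V|_U)_{(i)}$ as those sections pairing correctly against degree-one weighted functions. One checks in coordinates that~\eqref{equation: induced weighting on M} reproduces the monomial weighting generated by the $x^\alpha$ with $\alpha\cdot w\geq i$, so that $(M,N)$ becomes a weighted pair in the sense of~\eqref{equation: LM defn of weighting}, and that~\eqref{equation: induced filtration on sections}, applied to $f=y_b$ of weight $-v_b$, forces each component of $\sigma$ to lie in $C^\infty(U)_{(i-v_b)}$, reproducing the local model~\eqref{equation: local model for linear weighting} with vertical weights $v_b$. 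The two constructions are then visibly inverse: applying the forward map to the recovered weighting returns the original monomial generators, and conversely.

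The main obstacle is not any single computation but rather the bookkeeping needed to confirm that the constructions are well defined independently of the choices of weighted coordinates and weighted frame, and that the resulting data genuinely satisfy the axioms of Definition~\ref{definition: linear weighting} and of the weighting of $M$. In particular, I would take care to verify that the defining condition~\eqref{equation: induced filtration on sections}, phrased via evaluation of degree-one functions on sections together with the technical constraint $i+j>0$ (needed because $C^\infty(U)_{(j)}=C^\infty(U)$ for $j\leq 0$), agrees on the nose with the submodule description~\eqref{equation: local model for linear weighting}. This is the point at which the compatibility between the pairing $\la y_b, \sigma_a\ra$ and the vertical weights must be used, and it is what guarantees that no spurious weightings arise from the polynomial picture.
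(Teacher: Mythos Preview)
Your proposal is correct and follows essentially the same approach as the paper: the paper constructs the forward map via the identification $C^\infty_{[n]}(V|_U)_{(i)} = \G({\rm Sym}^n(V^*|_U))_{(i)}$ to obtain the local monomial model~\eqref{equation: local model for weighting}, constructs the backward map via~\eqref{equation: induced weighting on M} and~\eqref{equation: induced filtration on sections}, and then simply asserts that the two constructions are clearly inverse to one another. Your outline is, if anything, more explicit than the paper about the bookkeeping required to verify the axioms and the role of the constraint $i+j>0$.
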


We refer to the coordinates in the above theorem as \emph{weighted vector bundle coordinates}. 

\begin{example}
    If $W\to N$ is a vector subbundle of $V\to M$, then order of vanishing defines a linear weighting of $V$, which we refer to as the trivial weighting along $W$. This agrees with the linear weighting defined in Example~\ref{example: trivial weighting along subbundle}.
\end{example}

\begin{remark}
    Let $V\to M$ be a linearly weighted vector bundle. Recall that the weighting of $V$ determines a filtration
        \[ \cdots \supseteq (V|_N)_i \supseteq (V|_N)_{i+1} \supseteq \cdots \]
    of $V|_N$ by wide subbundles. In local vector bundle coordinates $x_1, \dots, x_m \in C^\infty(U)$, $y_1, \dots, y_k\in C^\infty_{[1]}(V|_U)$, $(V|_{N\cap U})_{i}$ is cut out by the equations 
        \begin{align*}
            x_a = 0 & \text{ for } w_a > 0,  \\
            y_b = 0 & \text{ for } v_b < i.
        \end{align*}
\end{remark}

\subsubsection{Weighted subbundles} 
\label{subsection: weighted subbundles}

Now that we have a characterization of linear weightings in terms of polynomial functions, we can define weighted subbundles. 

\begin{definition}
    Let $V\to M$ be a weighted vector bundle. A subbundle $W\to R$ is a \emph{weighted subbundle} if there exists a weighted atlas of subbundle coordinates for $W$. 
\end{definition}

That is, $W$ is a weighted subbundle if and only if for every $w\in W$ there exist weighted vector bundle coordinates $x_a\in C^\infty_{[0]}(V|_U)$ and $y_b \in C^\infty_{[1]}(V|_U)$ such that $W$ is locally defined by the vanishing of a subset of the coordinates. Coordinates with this property are called \emph{weighted subbundle coordinates}.

\begin{remark}
    Suppose that $V\to M$ is a linearly weighted vector bundle and the induced weighting of $M$ is along $N$. If $W\to R$ is a weighted subbundle, then necessarily we have that $R$ and $N$ intersect cleanly. 
\end{remark}

\begin{example}
    If $V$ is a linearly weighted vector bundle over the weighted pair $(M,N)$, then $V|_N$ is a weighted subbundle. Any system of weighted vector bundle coordinates are weighted subbundle coordinates for $V|_N$.
\end{example}

\begin{example}
    If $V$ is linearly weighted vector space then any subspace is a weighted subspace. 
\end{example}

\begin{proposition}
    Let $V \to M$ be a linearly weighted vector bundle. If $W \to R$ is a weighted subbundle, then $W$ inherits a linear weighting. The induced weighting of $R$ is along along $R\cap N$. 
\end{proposition}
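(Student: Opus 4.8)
The plan is to exploit the characterization of linear weightings by filtrations of polynomial functions in Theorem~\ref{theorem: linear weightings in terms of polynomials}. Since a weighted subbundle $W\sset V$ is in particular a submanifold invariant under scalar multiplication, restriction of functions
\[ \rho : C^\infty_{pol}(V) \to C^\infty_{pol}(W) \]
is a surjective, grading-preserving algebra homomorphism. I would define the inherited weighting of $W$ by declaring
\[ C^\infty_{pol}(W)_{(i)} = \rho\bigl(C^\infty_{pol}(V)_{(i)}\bigr). \]
Because $\rho$ is an algebra homomorphism over the ring map $C^\infty(M)\to C^\infty(R)$, the image of a multiplicative filtration by $C^\infty_M$-submodules is again a multiplicative filtration, now by $C^\infty_R$-submodules; and surjectivity of $\rho$ guarantees that this is a filtration of all of $C^\infty_{pol}(W)$. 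This definition is manifestly independent of any choice of coordinates, which is exactly why passing through polynomial functions is convenient.

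The substance of the proof is the local verification that this filtration satisfies the hypothesis of Theorem~\ref{theorem: linear weightings in terms of polynomials}. Fix $p\in W$ and choose weighted subbundle coordinates $x_1,\dots,x_m \in C^\infty_{[0]}(V|_U)$, $y_1,\dots,y_k\in C^\infty_{[1]}(V|_U)$ so that, for index sets $A\sset\{1,\dots,m\}$ and $B\sset\{1,\dots,k\}$,
\[ W\cap V|_U = \{ x_a = 0\ (a\in A),\ y_b = 0\ (b\in B)\}. \]
Then $\rho(x^sy^t)$ vanishes unless $s_a = 0$ for all $a\in A$ and $t_b = 0$ for all $b\in B$, in which case it is the monomial in the surviving coordinates $x_a|_W$ $(a\notin A)$, $y_b|_W$ $(b\notin B)$. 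From the explicit generators~\eqref{equation: local model for weighting} of $C^\infty_{pol}(V|_U)_{(i)}$ one reads off that $C^\infty_{pol}(W|_U)_{(i)}$ is generated over $C^\infty(R\cap U)$ precisely by the surviving monomials with $\sum_{a\notin A}s_aw_a - \sum_{b\notin B}t_bv_b \geq i$. Both inclusions are immediate: images of the generators are of this form, and each such monomial is the image of itself (its $V$-weight equals its reduced $W$-weight, since the killed coordinates do not occur). This is exactly the local model of Theorem~\ref{theorem: linear weightings in terms of polynomials} with horizontal weights $(w_a)_{a\notin A}$ and vertical weights $(v_b)_{b\notin B}$, so the theorem returns a linear weighting of $W\to R$.

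It remains to identify the induced weighting of the base. By~\eqref{equation: induced weighting on M}, $C^\infty(R\cap U)_{(i)}$ is the $C^\infty(R\cap U)$-span of the monomials $x^s$ in the surviving horizontal coordinates with $\sum_{a\notin A}s_aw_a \geq i$, so $C^\infty(R)_{(1)}$ is the ideal generated by $\{ x_a|_R : a\notin A,\ w_a>0\}$. Since $M$ is weighted along $N$, locally $N\cap U = \{x_a = 0 : w_a>0\}$, and intersecting with $R = \{x_a = 0 : a\in A\}$ shows that $R\cap N$ is cut out inside $R$ by $\{x_a|_R = 0 : a\notin A,\ w_a>0\}$ (in particular $R$ and $N$ meet cleanly, so $R\cap N$ is a closed submanifold). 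Hence $C^\infty(R)_{(1)} = \van{R\cap N}$, i.e.\ the induced weighting of $R$ is along $R\cap N$.

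The only genuine obstacle is organizational: one must keep careful track of which coordinates survive restriction and check that the weight inequality $\alpha\cdot w - \beta\cdot v\geq i$ transports to the reduced inequality on $W$ without loss. Once the local computation above is in place, coordinate-independence and the patching of the local models across charts are handled automatically by the coordinate-free definition of $\rho$ together with Theorem~\ref{theorem: linear weightings in terms of polynomials}.
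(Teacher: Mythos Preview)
Your proposal is correct and is essentially the paper's own proof: the paper defines the inherited filtration by $C^\infty_{pol}(W)_{(i)} = C^\infty_{pol}(V)_{(i)}/(C^\infty_{pol}(V)_{(i)}\cap \van{W})$, which is exactly your $\rho(C^\infty_{pol}(V)_{(i)})$, and then observes that restricted weighted subbundle coordinates serve as weighted vector bundle coordinates for $W$. You spell out the local generator check and the identification of the base weighting along $R\cap N$ in more detail than the paper does, but the argument is the same.
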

\begin{proof}
    The weighting of $W$ is given by 
        \[ C^\infty_{pol}(W)_{(i)} = C^\infty_{pol}(V)_{(i)}/(C^\infty_{pol}(V)_{(i)} \cap \van{W}), \]
    where $\van{W}$ denotes the vanishing ideal of $W$. To see that this defines a weighting, we note that the restriction of weighted subbundle coordinates give weighted vector bundle coordinates for $W|_{U\cap R}$. 
\end{proof}

\subsection{Weighted VB-morphisms}

We close this sections be discussing the morphisms of weighted vector bundles. The easiest definition uses the characterization of linear weightings in terms of polynomial functions. 

\begin{definition}
    A vector bundle morphism $\varphi:V\to V'$ between linearly weighted vector bundles $V\to M$ and $V'\to M'$ is called \emph{weighted} if $\varphi^*C^\infty_{pol}(V')_{(i)} \sset C^\infty_{pol}(V)_{(i)}$ for all $i\in \Z$. 
\end{definition}

Recall that a vector bundle map $\varphi:V\to V'$ induces a module map $\varphi^*:\G((V')^*) \to \G(V^*)$. In terms of this perspective, one has the following characterization of weighted vector bundle morphisms. 
	
\begin{proposition}
    Let $\varphi:V\to V'$ be a vector bundle morphism between linearly weighted vector bundles $V\to M$ and $V'\to M'$, respectively. The $\varphi$ is weighted if and only if 
        \begin{enumerate}
            \item the base map $\varphi_M:M\to M'$ is weighted and 
            \item for all $i\in \Z$, $\varphi^* : \G((V')^*)_{(i)} \to \G(V^*)_{(i)}$.
        \end{enumerate}
\end{proposition}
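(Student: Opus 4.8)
The plan is to use that the polynomial filtration is graded by homogeneous degree and that a vector bundle morphism respects this grading, so that the single inclusion defining weightedness decouples into its degree-$0$ and degree-$1$ components. I first record the structural facts. Because $\varphi$ is fibrewise linear over $\varphi_M$, the pullback $\varphi^* = (-)\circ\varphi$ preserves homogeneous degree, $\varphi^* C^\infty_{[n]}(V') \sset C^\infty_{[n]}(V)$, and is an algebra homomorphism $C^\infty_{pol}(V')\to C^\infty_{pol}(V)$. By construction the target filtration is graded, $C^\infty_{pol}(V)_{(i)} = \bigoplus_{n\ge 0} C^\infty_{[n]}(V)_{(i)}$, and multiplicative. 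I then identify the two lowest pieces: under $C^\infty_{[0]}(V)\cong C^\infty(M)$ one has $C^\infty_{[0]}(V)_{(i)} = C^\infty(M)_{(i)}$ by~\eqref{equation: induced weighting on M}, and under $C^\infty_{[1]}(V) = \G(V^*)$ one has $C^\infty_{[1]}(V)_{(i)} = \G(V^*)_{(i)}$ for the dual weighting. The latter requires a short check: the defining condition~\eqref{equation: induced filtration on sections} and the defining condition for the dual weighting differ only by the constraint $i+j>0$, and this is vacuous exactly when $i+j\le 0$, since there $C^\infty(M)_{(i+j)} = C^\infty(M)$; hence the two descriptions coincide.

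For the forward implication, suppose $\varphi$ is weighted. Given $f\in C^\infty(M')_{(i)} = C^\infty_{[0]}(V')_{(i)} \sset C^\infty_{pol}(V')_{(i)}$, weightedness gives $\varphi^* f \in C^\infty_{pol}(V)_{(i)}$, while degree preservation gives $\varphi^* f\in C^\infty_{[0]}(V)$; since the filtration is graded, an element of $C^\infty_{pol}(V)_{(i)}$ that is homogeneous of degree $0$ lies in $C^\infty_{[0]}(V)_{(i)} = C^\infty(M)_{(i)}$, which is~(1). Repeating the argument verbatim in degree $1$ with $\tau\in\G((V')^*)_{(i)} = C^\infty_{[1]}(V')_{(i)}$ yields $\varphi^*\tau\in\G(V^*)_{(i)}$, which is~(2).

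For the converse, assume~(1) and~(2). The inclusion to be verified can be checked locally, so I work over a chart $U'\sset M'$ carrying weighted vector bundle coordinates $x_a, y_b$ with weight data $w', v'$. By Theorem~\ref{theorem: linear weightings in terms of polynomials}, $C^\infty_{pol}(V'|_{U'})_{(i)}$ is generated as a $C^\infty(U')$-module by the monomials $x^s y^t$ with $s\cdot w' - t\cdot v' \ge i$. Each such generator is a product of the degree-$0$ factors $x_a\in C^\infty(M')_{(w'_a)}$ and the degree-$1$ factors $y_b\in\G((V')^*)_{(-v'_b)}$; applying~(1) and~(2) to these factors and invoking that $\varphi^*$ is an algebra homomorphism into the multiplicative filtration on $C^\infty_{pol}(V)$ places $\varphi^*(x^s y^t)$ in filtration degree $\ge \sum_a s_a w'_a - \sum_b t_b v'_b \ge i$. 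Multiplying by the pullback of a coefficient from $C^\infty(U') = C^\infty(U')_{(0)}$ preserves this degree, so $\varphi^*$ carries every local generator, and hence all of $C^\infty_{pol}(V'|_{U'})_{(i)}$, into $C^\infty_{pol}(V)_{(i)}$; locality of the filtration then promotes this to the global inclusion $\varphi^* C^\infty_{pol}(V')_{(i)}\sset C^\infty_{pol}(V)_{(i)}$.

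The main obstacle is the degree-$1$ identification $C^\infty_{[1]}(V)_{(i)} = \G(V^*)_{(i)}$ together with the observation that, by the local model of Theorem~\ref{theorem: linear weightings in terms of polynomials}, the graded multiplicative filtration on $C^\infty_{pol}(V')$ is generated over $C^\infty(M')$ by its degree-$0$ and degree-$1$ filtered pieces. This is precisely what makes the two conditions~(1) and~(2) enough to control $\varphi^*$ in all degrees; the remainder is bookkeeping with the local monomial generators.
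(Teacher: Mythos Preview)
Your proof is correct and follows essentially the same approach as the paper: both exploit that $\varphi^*$ preserves homogeneous degree to extract conditions~(1) and~(2) from the graded filtration in degrees~$0$ and~$1$, and both invoke for the converse that $C^\infty_{pol}(V')_{(i)}$ is locally generated as a filtered algebra over $C^\infty(M')$ by $C^\infty_{[1]}(V') = \G((V')^*)$. Your write-up is simply more explicit, in particular in verifying the identification $C^\infty_{[1]}(V)_{(i)} = \G(V^*)_{(i)}$ (which the paper records as equation~\eqref{equation: polynomial and symmectric algebra equivalence}) and in spelling out the monomial computation for the converse.
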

\begin{proof}
    Suppose that $\varphi:V \to V'$ is a weighted vector bundle morphism. Then, 
        \begin{align*}
            \varphi_M^*C^\infty(M')_{(i)} & = \varphi_M^*(C^\infty_{pol}(V')_{(i)} \cap C^\infty_{[0]}(W)) \\
            & \sset C^\infty_{pol}(V)_{(i)} \cap C^\infty_{[0]}(V) = C^\infty(M)_{(i)},
        \end{align*}
    whence $\varphi_M:M\to M'$ is weighted. Similarly, for any $\sigma \in \G((V')^*)$, let $f_\sigma \in C^\infty_{[1]}(V)$ be the corresponding function under the identification $C^\infty_{[1]}(V) = \G(V^*)$. Then
        \[ \varphi^*(\sigma) = \varphi^*f_\sigma \in C^\infty_{[1]}(V)_{(i)} = \G(V^*)_{(i)}.  \]

    The converse follows by a similar argument, using that $\varphi^*: C^\infty_{pol}(V')\to C^\infty_{pol}(V)$ is an algebra morphism and $C^\infty_{pol}(V)_{(i)}$ is locally generated as a filtered algebra by $C^\infty(M)$ and $C^\infty_{[1]}(V) = \G(V^*)$. 
\end{proof}

\begin{example}
    If $f:M\to M'$ is a weighted morphism between weighted pairs $(M, N)$ and $(M', N')$, then the tangent map $Tf:TM \to TM'$ is a weighted vector bundle morphism. 
\end{example}
	
In the case that $V$ and $V'$ are vector bundles over a common base the definition of a weighted vector bundle morphism reduces to the one that one might expect. 
	
\begin{proposition}
    Suppose that that $V$ and $V'$ and linearly weighted vector bundles over a common weighted pair $(M, N)$ and $\varphi : V\to V'$ is a vector bundle morphism covering the identity. Then $\varphi$ is a weighted vector bundle morphism if and only if $\varphi(\Gamma(V)_{(i)}) \sset \Gamma(V')_{(i)}$ for all $i$. 
\end{proposition}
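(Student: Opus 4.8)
The plan is to deduce this from the preceding proposition by exploiting that $\varphi$ covers the identity, and then to run a purely formal duality argument. Since $\varphi$ covers $\id_M$, its base map is $\id_M$, which is trivially weighted; hence condition (1) of the previous proposition holds automatically, and the claim reduces to showing that the condition ``$\varphi^*(\G((V')^*)_{(i)}) \sset \G(V^*)_{(i)}$ for all $i$'' is equivalent to ``$\varphi(\G(V)_{(i)}) \sset \G(V')_{(i)}$ for all $i$.'' So the real work is to convert between the dual-module characterization supplied by the previous proposition and the direct pushforward condition in the statement.

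The key identity is the adjunction $\la \varphi^*\tau, \sigma\ra = \la \tau, \varphi(\sigma)\ra$ in $C^\infty(M)$, valid for $\tau \in \G((V')^*)$ and $\sigma \in \G(V)$, where $\varphi^*$ is the induced module map $\G((V')^*)\to\G(V^*)$. Combined with the definition of the dual weighting and with biduality $V' = ((V')^*)^*$ (noted in the discussion of the dual weighting), this renders the two implications mirror images of one another.

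For the forward direction I would take $\sigma \in \G(V)_{(j)}$ and prove $\varphi(\sigma) \in \G(V')_{(j)}$. By biduality it suffices to check $\la \tau, \varphi(\sigma)\ra \in C^\infty(M)_{(i+j)}$ for every $\tau \in \G((V')^*)_{(i)}$ and every $i$. Rewriting the left side as $\la \varphi^*\tau, \sigma\ra$ and invoking the hypothesis $\varphi^*\tau \in \G(V^*)_{(i)}$, the required membership is exactly the defining property of the dual weighting of $V^*$ paired against $\sigma \in \G(V)_{(j)}$.

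The converse is the symmetric statement: given $\tau \in \G((V')^*)_{(i)}$, to show $\varphi^*\tau \in \G(V^*)_{(i)}$ I would verify $\la \varphi^*\tau, \sigma\ra \in C^\infty(M)_{(i+j)}$ for all $\sigma \in \G(V)_{(j)}$ and all $j$, rewrite the pairing as $\la \tau, \varphi(\sigma)\ra$, apply the hypothesis $\varphi(\sigma) \in \G(V')_{(j)}$, and conclude by the defining property of the dual weighting of $(V')^*$. I do not expect a genuine obstacle here, since the argument is formal; the one point demanding care is the appeal to biduality, which guarantees that membership in $\G(V')_{(j)}$ is detected by pairing against the filtered pieces of $\G((V')^*)$ — precisely the content of $V' = ((V')^*)^*$ as weighted vector bundles.
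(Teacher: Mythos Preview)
Your proposal is correct and follows essentially the same approach as the paper's proof: both reduce via the preceding proposition to the dual-pullback condition, then use the adjunction $\la \varphi^*\tau,\sigma\ra = \la \tau,\varphi(\sigma)\ra$ together with biduality $V' = ((V')^*)^*$ to pass between the two formulations. The paper writes out one implication and dismisses the other as similar, whereas you spell out both, but the underlying argument is identical.
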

\begin{proof}
    Suppose that $\varphi^*(\Gamma((V')^*)_{(i)}) \sset \Gamma(V^*_{(i)})$, and let $\sigma \in \Gamma(V)_{(i)}$. Let $\tau \in \Gamma((V')^*)_{(j)}$ be arbitrary, and note that 
		\[ \la \tau, \varphi(\sigma) \ra = \la \varphi^*(\tau), \sigma \ra \in C^\infty(M)_{(i+j)},  \]
    since $\varphi^*(\tau)\in \Gamma(V^*)_{(j)}$; here the angular brackets denote the pairing between a vector bundle and its dual. This implies that $\varphi(\sigma)\in \Gamma(V')_{(i)}$, proving one inclusion. The opposite inclusion follows similarly.
\end{proof}

\section{The weighted normal bundle}

We now define the weighted normal bundle of a linearly weighted vector bundle. Since we have defined weightings in terms of filtrations of smooth functions on $V$ which are polynomial in the fibres, much of the discussion in~\cite[Section 4]{LOIZ-MEIN2020} can be carried over, almost verbatim, to our setting. 

\subsection{Definition}

Let $V\to M$ be a linearly weighted vector bundle. Let $\gr(C^\infty_{pol}(V))$ be the graded algebra associated to the filtered algebra, with graded components given by 
    \[ \gr(C^\infty_{pol}(V))_{i} = C^\infty_{pol}(V)_{(i)}/C^\infty_{pol}(V)_{(i+1)}.  \]

\begin{definition}
    The \emph{weighted normal bundle} of the linearly weighted vector bundle $V\to M$ is the character spectrum 
        \[ \nuw(V) = \aHom(\gr(C^\infty_{pol}(V)), \R) \]
\end{definition}

Given a function $f\in C^\infty_{pol}(V)_{(i)}$, let $f^{[i]}$ denote the class of $f$ in $\gr(C^\infty_{pol}(V))_{i}$. We think of $f^{[i]}$ as a function on $\nuw(V)$, defined by evaluation
    \begin{align*}
        f^{[i]} : \nuw(V)  \to \R, \quad \varphi  \mapsto \varphi(f^{[i]}). 
    \end{align*}
Let $\pi:V\to M$ and $\kappa_\lambda: V \to V$ denote the bundle projection and scalar multiplication by $\lambda \in \R$, respectively. Since these maps are filtration preserving, they induce maps 
    \[ \nuw(\pi) : \nuw(V) \to \nuw(M,N) \quad \text{and} \quad \nuw(\kappa_\lambda) : \nuw(V) \to \nuw(V). \]
The proof of~\cite[Theorem 4.2]{LOIZ-MEIN2020} carries over almost verbatim to our setting to give the following theorem. 

\begin{theorem}
    Let $V\to M$ be a weighted vector bundle. The weighted normal bundle $\nuw(V)$
    has the unique structure of a $C^\infty$-vector bundle over $\nuw(M,N)$ of rank equal to that of $V$, such that for all $n\geq 0$ 
        \[\gr(C^\infty_{[n]}(V)) \sset C^\infty_{[n]}(\nuw(V)). \]
    Given weighted vector bundle coordinates $x_a$ and $y_b$ on $V|_U$, the functions $x_a^{[w_a]}$, $y_b^{[v_b]}$ serve as vector bundle coordinates on $\nuw(V|_U) = \nuw(V)|_{\nuw(U, U\cap N)}$. Moreover, this construction is functorial: any weighted vector bundle morphism $\varphi:V\to W$ between linearly weighted vector bundles defines a vector bundle morphism  $\nuw(\varphi):\nuw(V)\to \nuw(W)$.
\end{theorem}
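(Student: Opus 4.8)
The plan is to reduce the statement to its counterpart for the base, \cite[Theorem 4.2]{LOIZ-MEIN2020}, and then to promote the resulting $C^\infty$-structure to a vector bundle structure by exploiting the grading of $C^\infty_{pol}(V)$ by fibrewise polynomial degree. Throughout I would work in a chart $U \sset M$ equipped with weighted vector bundle coordinates $x_a, y_b$ as furnished by Theorem~\ref{theorem: linear weightings in terms of polynomials}, and assemble the global statement from the local models afterwards.

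The first step is to identify the local model of the associated graded algebra. Since $C^\infty_{pol}(V|_U)_{(i)}$ is generated over $C^\infty(U)$ by the weighted monomials $x^\alpha y^\beta$ with $\alpha \cdot w - \beta \cdot v \ge i$, passing to the associated graded yields a graded algebra isomorphism
    \[ \gr(C^\infty_{pol}(V|_U)) \cong \gr(C^\infty(U))[\,y_1^{[-v_1]}, \dots, y_k^{[-v_k]}\,], \]
a polynomial algebra over $\gr(C^\infty(U))$ on the leading symbols of the vertical coordinates (recall $y_b$ carries weight $-v_b$). Applying $\aHom(-, \R)$ then realises $\nuw(V|_U)$ as a product of $\nuw(U, U\cap N)$ with $\R^k$, with the leading symbols $x_a^{[w_a]}$ and $y_b^{[-v_b]}$ serving as coordinate functions; this simultaneously gives the local identification $\nuw(V|_U) = \nuw(V)|_{\nuw(U, U\cap N)}$ and pins down the candidate coordinates.

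The second step is to produce the $C^\infty$-structure. Because the filtration of $C^\infty_{pol}(V)$ is again of weighted-monomial type in the combined coordinates $x_a, y_b$, the argument of \cite[Theorem 4.2]{LOIZ-MEIN2020} applies essentially verbatim to yield a unique $C^\infty$-structure on $\nuw(V)$ for which every symbol $f^{[i]}$ is smooth, with the $x_a^{[w_a]}, y_b^{[-v_b]}$ as local coordinates. The point demanding genuine verification---and the main obstacle---is the smoothness of the transition maps between two overlapping weighted charts: a simultaneous change of weighted coordinates on $U$ and of weighted frame for $V|_U$ induces on leading symbols a transformation that is weighted-homogeneous, hence polynomial in the fibre symbols $y_b^{[-v_b]}$, with coefficients that are smooth functions pulled back from the overlap. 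One must check that this leading-symbol transformation is well defined (independent of representatives) and smooth, exactly as in the base case but now tracking the vertical weights; this is where the bulk of the work lies.

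Finally, I would upgrade $\nuw(V) \to \nuw(M,N)$ to a vector bundle and record functoriality. The $\N$-grading $C^\infty_{pol}(V) = \bigoplus_n C^\infty_{[n]}(V)$ by fibre degree is compatible with the filtration, so it descends to $\gr(C^\infty_{pol}(V))$; dually, the scalar multiplications $\kappa_\lambda$ induce a smooth action of $(\R, \cdot)$ on $\nuw(V)$ over $\nuw(M,N)$ under which each symbol $f^{[i]}$ with $f \in C^\infty_{[n]}(V)$ is homogeneous of degree $n$. This is precisely the inclusion $\gr(C^\infty_{[n]}(V)) \sset C^\infty_{[n]}(\nuw(V))$, and by the characterisation of vector bundles as manifolds carrying such a regular scalar multiplication (\cite{grabowski2009higher}, already invoked in Section~\ref{subsection: preliminaries}) it endows $\nuw(V)$ with a vector bundle structure; the degree-one symbols $y_b^{[-v_b]}$ furnish $k$ fibre coordinates, so the rank equals that of $V$. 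For functoriality, a weighted VB-morphism $\varphi : V \to W$ pulls back to a filtration- and fibre-degree-preserving algebra map $\varphi^* : C^\infty_{pol}(W) \to C^\infty_{pol}(V)$, whence $\gr(\varphi^*)$ is a bigraded algebra map and $\nuw(\varphi) = \aHom(\gr(\varphi^*), \R)$ is smooth---it sends symbols to symbols---and intertwines the scalar multiplications, so it is a morphism of vector bundles over $\nuw(\varphi_M)$.
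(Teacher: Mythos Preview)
Your proposal is correct and matches the paper's approach: the paper does not give an independent proof but simply states that the argument of \cite[Theorem~4.2]{LOIZ-MEIN2020} ``carries over almost verbatim,'' with the vector bundle structure supplied by the induced maps $\nuw(\pi)$ and $\nuw(\kappa_\lambda)$. Your write-up is a faithful unpacking of exactly that, and your appeal to the Grabowski--Rotkiewicz characterisation to promote the scalar action to a vector bundle structure is the natural way to make the last step precise (the paper leaves it implicit). One cosmetic point: the paper writes the fibre-coordinate symbols as $y_b^{[v_b]}$ rather than your $y_b^{[-v_b]}$; your sign is the one consistent with the convention stated just below~\eqref{equation: local model for weighting}, so this appears to be a typo in the theorem statement rather than an error on your part.
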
  

The vector bundle projection on $\nuw(V)$ given by $\nuw(\pi)$ and scalar multiplication by $\lambda \in \R$ is given by $\nuw(\kappa_\lambda)$,

\begin{example}
    Suppose that $W\to N$ is a subbundle of $V \to M$. If $V$ is given the trivial weighting along $W$, then 
        \[ \nuw(V) = \nu(V, W),  \]
    as a vector bundle over $\nu(M,N)$. 
\end{example}

\begin{example}
    If $V$ is linearly weighted by a filtration of wide subbundles 
        \[ V = V_{-r} \supseteq V_{-r+1} \supseteq \cdots \supseteq V_{-1} \supseteq 0, \]
    as in Example~\ref{example: weighting defined by subbundles}, then $\nuw(V) = \gr(V) \to M$. In particular, if $V = V_{-r}\oplus \cdots \oplus V_{-1}$ is a graded vector bundle, then $\nuw(V) = V$.
\end{example}

\begin{example}
    If $(M,N)$ is a weighted pair and $TM$ is linearly weighted as in Example~\ref{example: tangent weighting}, then 
        \[ \nuw(TM) = T\nuw(M,N). \]
\end{example}

\begin{remark}
    The weighted normal bundle is invariant under shiftings of the weighting: for any $k \in \Z$ one has that 
        \[ \nuw(V[k]) = \nuw(V). \]
    In particular, we can shift the weighting of $V$ so that all the vertical weights are non-negative, and this allows us to realize the weighted normal bundle as a bundle of homogeneous spaces of nilpotent Lie groups (cf.~\cite[Proposition 7.7]{LOIZ-MEIN2020}.)
\end{remark}

\subsection{Sections of the weighted normal bundle}
\label{subsection: sections of the normal bundle}

We now want to understand how sections of the weighted normal bundle are related to the associated graded module 
    \[ \gr(\G(V)) = \bigoplus_{i\in \Z} \G(V)_{(i)}/\G(V)_{(i+1)}.  \]
Given $\sigma \in \G(V)_{(i)}$, let $\sigma^{[i]}$ denote its class in $\G(V)_{(i)}/\G(V)_{(i+1)}$. Recall that for any $f \in C^\infty_{[n]}(V)_{(j)}$ one has that $f\circ \sigma \in C^\infty(M)_{(j+ni)}$. 

\begin{definition}
    Given $\sigma \in \G(V)_{(i)}$, the \emph{$i$-th homogeneous approximation} is the map $ \sigma^{[i]} : \nuw(M,N) \to \nuw(V)$ defined by 
        \begin{equation}
        \label{equation: homogeneous approximation definition}
            (\sigma^{[i]}(\varphi))(f^{[j]}) = \varphi((f\circ \sigma)^{[j+ni]}),
        \end{equation}
    where $\varphi \in \nuw(M,N)$ and $f \in C^\infty_{[n]}(V)_{(j)}$.
\end{definition}

\begin{lemma}
\label{lemma: graded modules gives sections of weighted normal bundle}
    For $\sigma \in \G(V)_{(i)}$, the $i$-th homogeneous approximation is a smooth section of $\nuw(V)$ which depends only on the class of $\sigma$ in $\G(V)_{(i)}/\G(V)_{(i+1)}$. Moreover, for any $f\in C^\infty_{[n]}(V)_{(j)}$ and $g\in C^\infty(M)_{(k)}$ one has 
        \begin{equation}
        \label{equation: homogeneous approximation properties}
            f^{[j]}\circ \sigma^{[i]} = (f\circ \sigma)^{[j+ni]} \in C^\infty(\nuw(M,N)) \quad \text{and} \quad g^{[k]}\sigma^{[i]} = (g\sigma)^{[i+k]} \in \G(\nuw(V)).
        \end{equation}
\end{lemma}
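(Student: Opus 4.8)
The plan is to realize the homogeneous approximation as the post-composition of $\varphi$ with a single graded algebra homomorphism built from the section $\sigma$, and then to read every assertion of the lemma off this description. Concretely, $\sigma\in\G(V)_{(i)}$ determines the pullback algebra homomorphism $\sigma^*:C^\infty_{pol}(V)\to C^\infty(M)$, $f\mapsto f\circ\sigma$. This does not preserve the filtration on the nose, but by the recalled estimate $f\circ\sigma\in C^\infty(M)_{(j+ni)}$ for $f\in C^\infty_{[n]}(V)_{(j)}$, together with the fact that $f\in C^\infty_{[n]}(V)_{(j+1)}$ forces $f\circ\sigma\in C^\infty(M)_{(j+ni+1)}$, it descends to a well-defined map on associated graded pieces sending the bihomogeneous component of polynomial degree $n$ and filtration degree $j$ into $\gr(C^\infty(M))_{j+ni}$. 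The degree shifts add correctly under multiplication (the target degrees $j_1+n_1i$ and $j_2+n_2i$ sum to $(j_1+j_2)+(n_1+n_2)i$), so this defines an algebra homomorphism $\gr(\sigma^*):\gr(C^\infty_{pol}(V))\to\gr(C^\infty(M))$. First I would record that $\sigma^{[i]}(\varphi)=\varphi\circ\gr(\sigma^*)$; being a composite of algebra homomorphisms it lies in $\aHom(\gr(C^\infty_{pol}(V)),\R)=\nuw(V)$, and unitality is immediate since $1\circ\sigma=1$.

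Next I would check that $\sigma^{[i]}$ is a section of $\nuw(\pi)$. The projection is induced by the inclusion $C^\infty(M)=C^\infty_{[0]}(V)\hookrightarrow C^\infty_{pol}(V)$, and composing $\gr(\sigma^*)$ with it sends $g^{[k]}$ (for $g\in C^\infty(M)_{(k)}$, in polynomial degree $0$) to $(g\circ\sigma)^{[k]}=g^{[k]}$, because $\pi\circ\sigma=\id_M$; hence $\nuw(\pi)\circ\sigma^{[i]}=\id$. The first displayed identity $f^{[j]}\circ\sigma^{[i]}=(f\circ\sigma)^{[j+ni]}$ is then the definition unwound: evaluating the left side at $\varphi$ gives $\varphi((f\circ\sigma)^{[j+ni]})$, which is the value of $(f\circ\sigma)^{[j+ni]}$ at $\varphi$. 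Since $(f\circ\sigma)^{[j+ni]}\in\gr(C^\infty(M))$ is a smooth function on $\nuw(M,N)$ by the corresponding theorem of Loizides--Meinrenken, and the coordinate functions $x_a^{[w_a]},y_b^{[v_b]}$ furnish the smooth structure on $\nuw(V)$, smoothness of $\sigma^{[i]}$ can be tested by composing with them, so this identity simultaneously establishes smoothness.

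For the dependence only on the class of $\sigma$ in $\G(V)_{(i)}/\G(V)_{(i+1)}$ I would reduce to the claim that if $\tau\in\G(V)_{(i+1)}$ then $f\circ(\sigma+\tau)-f\circ\sigma\in C^\infty(M)_{(j+ni+1)}$ for every $f\in C^\infty_{[n]}(V)_{(j)}$, so that the classes $(f\circ\sigma)^{[j+ni]}$ are unchanged; I expect this to be the technical heart. Since $f\mapsto f\circ(\sigma+\tau)-f\circ\sigma$ is linear in $f$, I would work locally in weighted vector bundle coordinates and reduce to monomials $x^\alpha y^t$, which are products of the functions $x_a$ (degree $0$, with $x_a\circ(\sigma+\tau)-x_a\circ\sigma=0$) and the fibre-linear functions $y_b$ (degree $1$, with $y_b\in\G(V^*)_{(-v_b)}$). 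The base case is $y_b\circ\tau=\la y_b,\tau\ra\in C^\infty(M)_{(-v_b+i+1)}$, which is the filtered-pairing property of the dual weighting applied to $\tau\in\G(V)_{(i+1)}$. For the inductive step I would peel off one factor, writing $f=f_1f_2$ and expanding $f\circ(\sigma+\tau)-f\circ\sigma=(f_1\circ(\sigma+\tau)-f_1\circ\sigma)(f_2\circ(\sigma+\tau))+(f_1\circ\sigma)(f_2\circ(\sigma+\tau)-f_2\circ\sigma)$; bounding each summand with the inductive hypothesis, the estimates $f_a\circ\sigma,\ f_a\circ(\sigma+\tau)\in C^\infty(M)_{(j_a+n_a i)}$, and multiplicativity of the filtration, one finds both summands in $C^\infty(M)_{(j+ni+1)}$.

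Finally, for the second identity $g^{[k]}\sigma^{[i]}=(g\sigma)^{[i+k]}$ I would use that a section of $\nuw(V)\to\nuw(M,N)$ is determined by its pairings with fibre-linear functions, namely the classes $f^{[l]}$ of $f\in C^\infty_{[1]}(V)_{(l)}=\G(V^*)_{(l)}$, which by the normal bundle theorem include a frame for $\nuw(V)^*$. Pairing the left side and applying the first identity in degree $n=1$ gives $g^{[k]}\cdot(f\circ\sigma)^{[l+i]}=(g\,(f\circ\sigma))^{[l+i+k]}$, while pairing the right side gives $(f\circ(g\sigma))^{[l+i+k]}$; these agree because $f\circ(g\sigma)=g\,(f\circ\sigma)$ by $C^\infty(M)$-linearity of the pairing and because the product of classes in $\gr(C^\infty(M))$ is the class of the product. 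As both sides have the same pairing with every fibre-linear function, they coincide as sections of $\nuw(V)$.
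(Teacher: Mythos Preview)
Your proof is correct. Two steps take a genuinely different route from the paper. For the module identity $g^{[k]}\sigma^{[i]}=(g\sigma)^{[i+k]}$, the paper carries out a direct computation: it unwinds $\nuw(\kappa_{\varphi(g^{[k]})})(\sigma^{[i]}(\varphi))$ using $\kappa_\lambda^*f=\lambda^n f$ for $f\in C^\infty_{[n]}(V)$, repackages the resulting powers of $\varphi(g^{[k]})$ as $(g^n)^{[nk]}$, and uses $g^n(f\circ\sigma)=f\circ(g\sigma)$. Your reduction to fibre-linear $f$ via the frame $y_b^{[v_b]}$, where $f\circ(g\sigma)=g(f\circ\sigma)$ is just $C^\infty(M)$-linearity of the dual pairing, is more economical. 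For dependence only on the class modulo $\G(V)_{(i+1)}$, the paper instead shows that $\sigma\in\G(V)_{(i+1)}$ forces $\sigma^{[i]}$ to be the zero section: for $n\geq1$ one has $f\circ\sigma\in C^\infty(M)_{(j+n(i+1))}$, whence $(f\circ\sigma)^{[j+ni]}=0$, while for $n=0$ one recovers the zero-section value $\varphi(\kappa_0^*f^{[j]})$. This is shorter than your Leibniz-style induction on monomials, but it tacitly uses additivity of $\sigma\mapsto\sigma^{[i]}$ (which, as your own argument for the second identity shows, follows by reduction to fibre-linear coordinates); your explicit induction avoids this implicit step and is fully self-contained.
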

\begin{proof}
    The equation $f^{[j]}\circ \sigma^{[i]} = (f\circ\sigma)^{[j+ni]}$ follows from~\eqref{equation: homogeneous approximation definition}, and it follows from this that $\sigma^{[i]}:\nuw(M,N) \to \nuw(V)$ is smooth. To see that it is a section, we note that 
        \[ \nuw(\pi)\circ \sigma^{[i]} = \nuw(\pi\circ \sigma) = {\rm id}_{\nuw(M,N)}. \]
    Given $g \in C^\infty(M)_{(k)}$, $f\in C^\infty_{[n]}(V)_{(j)}$ and $\varphi \in \nuw(M,N)$, using that $f$ is homogeneous of degree $n$ we compute 
        \begin{align*}
            ((g^{[k]}\sigma^{[i]})(\varphi))(f^{[j]}) & = (\nuw(\kappa_{\varphi(g^{[k]})})\sigma^{[i]}(\varphi))(f^{[j]}) = (\sigma^{[i]}(\varphi))(\kappa_{\varphi(g^{[k]})}^*f^{[j]}) \\
            & = (\sigma^{[i]}(\varphi))(\varphi(g^{[k]})^nf^{[j]}) = \varphi(g^{[k]})^n\varphi((f\circ \sigma)^{[j+ni]}) \\
            & = \varphi((g^n)^{[nk]}(f\circ \sigma)^{[j+ni]}) = \varphi((g^n(f\circ \sigma))^{[j+ni+nk]}) \\
            & = \varphi((f\circ (g\sigma))^{[j+n(i+k)]}) = ((g\sigma)^{[i+k]}(\varphi))(f^{[j]}),
        \end{align*}
    which establishes~\eqref{equation: homogeneous approximation properties}. To show that $\sigma^{[i]}$ only depends on the class of $\sigma \in \G(V)_{(i)}/\G(V)_{(i+1)}$, it suffices to show that if $\sigma \in \G(V)_{(i+1)}$ then $\sigma^{[i]} = 0$. Specifically, what this means is that for any $f\in C^\infty_{pol}(V)_{(j)}$ and $\varphi \in \nuw(M,N)$ one has
        \begin{equation}
        \label{equation: what the zero section is}
            (\sigma^{[i]}(\varphi))(f^{[j]}) = \varphi(\kappa_0^*f^{[j]}).
        \end{equation}
    If $f\in C^\infty_{[n]}(V)_{(j)}$ with $n\geq 1$, one has that 
        \[ f\circ \sigma \in C^\infty(M)_{(j+n(i+1))} \implies (f\circ \sigma)^{[j+ni]} = 0, \]
    so~\eqref{equation: what the zero section is} follows in this case. For $f\in C^\infty_{[0]}(V)_{(j)} = C^\infty(M)_{(j)}$ one has that 
        \[ (\sigma^{[i]}(\varphi))(f^{[j]}) = \varphi((f\circ \sigma)^{[j]}) = \varphi(f^{[j]}) =  \varphi(\kappa_0^*f^{[j]}), \]
    since $f$ is homogeneous of degree zero. Since any polynomial is a sum of monomials, it follows that $\sigma^{[i]} = 0$.  
\end{proof}

This establishes a map $\gr(\G(V)) \to \G(\nuw(V))$. To see that it is non-trivial, we have the following proposition. 

\begin{theorem}
    If $\sigma_b$ is a weighted frame for $V|_U$, then the homogeneous approximations 
    $\sigma_b^{[-v_b]}$ define a frame for $\nuw(V|_U) = \nuw(V)|_{\nuw(U, U\cap N)}$. In particular, we have that 
        \[ \G(\nuw(V)) = C^\infty(\nuw(M,N))\otimes_{\gr(C^\infty(M))}\gr(\G(V)).  \]
\end{theorem}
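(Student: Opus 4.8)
The plan is to prove the frame statement locally, by pairing the candidate sections against the fibrewise-linear coordinate functions supplied by the structure theorem for $\nuw(V)$, and then to deduce the global tensor-product description by base change and a partition-of-unity gluing. First I would fix weighted vector bundle coordinates $x_a, y_b$ on $V|_U$ adapted to the given weighted frame, so that the vertical coordinates $y_b \in C^\infty_{[1]}(V|_U) = \G(V^*|_U)$ form the dual coframe, i.e. $y_{b'}\circ \sigma_b = \delta_{bb'}$ in $C^\infty(U)$. Recall that $y_{b'}$ carries weight $-v_{b'}$, so $y_{b'}\in C^\infty_{[1]}(V|_U)_{(-v_{b'})}$, while $\sigma_b \in \G(V)_{(v_b)}$; thus the relevant homogeneous approximation of the frame section is $\sigma_b^{[v_b]}$. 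Applying the first identity of~\eqref{equation: homogeneous approximation properties} from Lemma~\ref{lemma: graded modules gives sections of weighted normal bundle} with $f = y_{b'}$ (so $n=1$, $j=-v_{b'}$) and $\sigma=\sigma_b$ (so $i=v_b$) gives
\[ y_{b'}^{[-v_{b'}]}\circ \sigma_b^{[v_b]} = (y_{b'}\circ \sigma_b)^{[v_b-v_{b'}]} = \delta_{bb'}^{[v_b-v_{b'}]}, \]
which is the constant function $1$ on $\nuw(M,N)$ when $b=b'$ and is $0$ otherwise.

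By the structure theorem for $\nuw(V)$, the leading classes $y_b^{[-v_b]}$ of the vertical coordinates are exactly the fibrewise-linear coordinate functions on $\nuw(V|_U)$, i.e. they trivialise $\nuw(V|_U)^*$. The computation above shows that in this trivialisation the $k\times k$ matrix $\bigl[\,y_{b'}^{[-v_{b'}]}\circ \sigma_b^{[v_b]}\,\bigr]$ is the identity at every point, so the sections $\sigma_b^{[v_b]}$ are pointwise linearly independent and spanning and therefore constitute a frame for $\nuw(V|_U) = \nuw(V)|_{\nuw(U, U\cap N)}$. This proves the first assertion.

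For the tensor-product identity, observe that the homogeneous-approximation assignment defines a $\gr(C^\infty(M))$-linear map $\gr(\G(V))\to \G(\nuw(V))$, by the second identity of~\eqref{equation: homogeneous approximation properties}; since $\gr(C^\infty(M))$ maps into $C^\infty(\nuw(M,N))$ via $g^{[k]}\mapsto g^{[k]}$, extension of scalars produces a $C^\infty(\nuw(M,N))$-linear map
\[ \Phi: C^\infty(\nuw(M,N))\otimes_{\gr(C^\infty(M))}\gr(\G(V)) \to \G(\nuw(V)). \]
I would then check that $\Phi$ is an isomorphism over each $\nuw(U, U\cap N)$. The local model~\eqref{equation: local model for linear weighting} shows that $\gr(\G(V|_U))$ is free over $\gr(C^\infty(U))$ on the classes $\sigma_b^{[v_b]}$, so after base change the left-hand side is free on these classes; the first part of the theorem shows their images $\sigma_b^{[v_b]}$ form a frame of the right-hand side. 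Hence $\Phi$ carries a basis to a frame and is a local isomorphism. Since the $\nuw(U, U\cap N)$ cover $\nuw(M,N)$ and being an isomorphism is local for morphisms of $C^\infty$-modules, a partition of unity subordinate to the cover assembles the local inverses into a global one.

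The main obstacle I anticipate is the local frame computation: organising the weights so that~\eqref{equation: homogeneous approximation properties} is applied in the correct degrees and the pairing $y_{b'}^{[-v_{b'}]}\circ \sigma_b^{[v_b]}$, which a priori lives in graded degree $v_b - v_{b'}$, collapses to the constant $\delta_{bb'}$. Once the $\sigma_b^{[v_b]}$ are known to be a frame, the passage to the global statement is routine base change and gluing; the one technical point meriting care is the compatibility of the tensor product over $\gr(C^\infty(M))$ with restriction to the open sets $\nuw(U, U\cap N)$.
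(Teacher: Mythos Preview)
Your proof is correct and follows the paper's approach: pair the candidate sections against the dual linear coordinates via \eqref{equation: homogeneous approximation properties} to obtain a Kronecker delta and conclude they form a frame; the paper phrases the spanning step slightly more explicitly, writing any $\varphi\in\nuw(V|_U)$ as $\varphi = \sum_b \varphi(y_b^{[\cdot]})\,\sigma_b^{[\cdot]}(\varphi_0)$, but this is equivalent to your identity-matrix argument, and your treatment of the tensor-product identity is in fact more detailed than the paper's. One remark: you use $\sigma_b^{[v_b]}$ and $y_{b'}^{[-v_{b'}]}$ whereas the statement and the paper's own proof write $\sigma_b^{[-v_b]}$ and $y_a^{[v_a]}$; your signs are the ones consistent with the convention $\sigma_b\in\G(V)_{(v_b)}$ established by \eqref{equation: local model for linear weighting} and with the paper's own remark that the vertical coordinates $y_b$ carry weight $-v_b$, so the discrepancy reflects a sign typo in the paper rather than an error in your argument.
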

\begin{proof}
Let $y_b \in C^\infty_{[1]}(V|_U)_{(v_b)}$ be the linear vector bundle coordinates defined by the frame $\sigma_a$ and note that for any non-zero $\varphi_0 \in \nuw(U, U\cap N)$ one has 
    \begin{align*}
    \label{equation: vector bundle coordinate pairing}
    \begin{split}
            \sigma_b^{[-v_b]}(\varphi_0)\left(y_a^{[v_a]}\right) = \varphi_0 \left((y_a(\sigma_b))^{[v_b-v_a]}\right) = \left\{
            \begin{array}{cc}
                 1 & \text{if $a=b$} \\
                 0 & \text{else,}
            \end{array}
        \right.
    \end{split}
    \end{align*}
which shows that $\sigma_a^{[-v_a]}$ form a linearly independent set. 
    
Since $\gr(C^\infty_{pol}(V|_U))$ is generated as an algebra over $\gr(C^\infty(U))$ by the elements $y_b^{[v_b]}$, any $\varphi \in \nuw(V|_U)$ is determined by its value on $\gr(C^\infty(U))$ and $p_b^{[v_b]}$, $b=1, \dots, k$. In particular, if  $\varphi_0 = \nuw(\pi)(\varphi)$ then we have that 
        \[ \varphi = \sum_b \varphi\left(y_b^{[v_b]}\right)\sigma_b^{[-v_b]}(\varphi_0),\] 
so $\sigma_b^{[-v_b]}$ spans as well.
\end{proof}

\subsection{Properties of the weighted normal bundle}

\subsubsection{Action of $\R^\times$}

There is a smooth action 
    $\alpha : \R^\times\times \nuw(V)\to \nuw(V)$ 
on the weighted normal bundle such for any $f\in C^\infty_{pol}(V)_{(i)}$ the function $f^{[i]} \in C^\infty(\nuw(V))$ is homogeneous of degree $i$. It is related to the scalar multiplication on $\nuw(V)$ by the following proposition.

\begin{proposition}
    For any $\lambda \in \R^\times$ and $\sigma \in \G(V)_{(i)}$, the following diagram commutes
        \begin{equation*}
        \xymatrix{
            \nuw(V) \ar[r]^{\alpha_\lambda} & \nuw(V) \\
            \nuw(M,N) \ar[u]^{\sigma^{[i]}} \ar[r]_{\alpha_{\lambda}} & \nuw(M,N) \ar[u]_{\lambda^{-i}\sigma^{[i]}}
        }
        \end{equation*}
\end{proposition}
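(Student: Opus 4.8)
The plan is to unwind everything to the level of characters and verify the identity of maps $\nuw(M,N)\to\nuw(V)$ pointwise. Fix $\varphi\in\nuw(M,N)$. The two composites in the square are $\alpha_\lambda\circ\sigma^{[i]}$ (top-then-right) and $(\lambda^{-i}\sigma^{[i]})\circ\alpha_\lambda=\nuw(\kappa_{\lambda^{-i}})\circ\sigma^{[i]}\circ\alpha_\lambda$ (bottom-then-right), where I have used that scalar multiplication by $\lambda^{-i}$ on the vector bundle $\nuw(V)$ is by definition $\nuw(\kappa_{\lambda^{-i}})$. Both values are elements of $\nuw(V)=\aHom(\gr(C^\infty_{pol}(V)),\R)$, hence $\R$-linear characters, so it suffices to check that they agree on the homogeneous generators $f^{[j]}$ with $f\in C^\infty_{[n]}(V)_{(j)}$, since these span $\gr(C^\infty_{pol}(V))$ as a vector space (recall $C^\infty_{pol}(V)_{(j)}=\bigoplus_n C^\infty_{[n]}(V)_{(j)}$).

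The three scaling rules I will use are: first, the defining homogeneity of $\alpha$, namely $(\alpha_\lambda\chi)(f^{[j]})=\lambda^{j}\chi(f^{[j]})$ for $\chi\in\nuw(V)$ and $f\in C^\infty_{pol}(V)_{(j)}$, together with the analogous rule $(\alpha_\lambda\varphi)(g^{[k]})=\lambda^{k}\varphi(g^{[k]})$ on $\nuw(M,N)$; second, that for $f\in C^\infty_{[n]}(V)_{(j)}$ one has $\kappa_\mu^*f^{[j]}=\mu^{n}f^{[j]}$ (since $f$ is fibrewise homogeneous of degree $n$), whence $(\nuw(\kappa_\mu)\chi)(f^{[j]})=\mu^{n}\chi(f^{[j]})$; and third, the definition of the homogeneous approximation, $(\sigma^{[i]}(\varphi))(f^{[j]})=\varphi((f\circ\sigma)^{[j+ni]})$. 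With these, the top-then-right path gives
\[
\bigl(\alpha_\lambda(\sigma^{[i]}(\varphi))\bigr)(f^{[j]})=\lambda^{j}\,(\sigma^{[i]}(\varphi))(f^{[j]})=\lambda^{j}\,\varphi\bigl((f\circ\sigma)^{[j+ni]}\bigr).
\]
For the bottom-then-right path, noting that $f\circ\sigma\in C^\infty(M)_{(j+ni)}$ so that $(f\circ\sigma)^{[j+ni]}$ has degree $j+ni$ on $\nuw(M,N)$, I compute
\[
\bigl(\nuw(\kappa_{\lambda^{-i}})\,\sigma^{[i]}(\alpha_\lambda\varphi)\bigr)(f^{[j]})=\lambda^{-ni}\,(\alpha_\lambda\varphi)\bigl((f\circ\sigma)^{[j+ni]}\bigr)=\lambda^{-ni}\,\lambda^{\,j+ni}\,\varphi\bigl((f\circ\sigma)^{[j+ni]}\bigr),
\]
and the factors $\lambda^{-ni}$ and $\lambda^{ni}$ cancel, leaving exactly $\lambda^{j}\varphi((f\circ\sigma)^{[j+ni]})$. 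Thus the two characters agree on every generator $f^{[j]}$, so the square commutes.

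The content of the statement, and the only place where care is genuinely required, is the interplay of the two gradings: the filtration (weight) degree $j$ and the fibrewise polynomial degree $n$. The $\R^\times$-action $\alpha$ scales by the total weight degree, while the vector-bundle scalar multiplication $\nuw(\kappa_\mu)$ sees only the fibre degree $n$; composing $\sigma^{[i]}$ shifts the weight degree by exactly $ni$. The correction factor $\lambda^{-i}$ appearing in the lower-right corner is precisely the one needed to absorb the resulting discrepancy $\lambda^{ni}$, which is why it must be $\lambda^{-i}$ and not some other power. Everything else is the bookkeeping carried out above, and no further input beyond the definitions recalled in Section~\ref{subsection: sections of the normal bundle} is needed.
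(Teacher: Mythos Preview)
Your proof is correct and follows essentially the same route as the paper's: both arguments fix $\varphi\in\nuw(M,N)$, test against a generator $f^{[j]}$ with $f\in C^\infty_{[n]}(V)_{(j)}$, and reduce the claim to the numerical identity $\lambda^{j}=\lambda^{-ni}\cdot\lambda^{j+ni}$ coming from the interaction of the filtration degree and the fibre degree. The only cosmetic difference is that the paper runs a single chain of equalities (rewriting $\lambda^{j}(f\circ\sigma)^{[j+ni]}=\lambda^{j+ni}(f\circ(\lambda^{-i}\sigma))^{[j+ni]}$ and then recognizing the definition of $(\lambda^{-i}\sigma)^{[i]}$), whereas you evaluate the two composites separately and match them; the content is identical.
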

\begin{proof}
    Let $f \in C^\infty_{[n]}(V)_{(j)}$ and $\varphi \in \nuw(M,N)$. We compute 
        \begin{align*}
            (\alpha_\lambda(\sigma^{[i]}(\varphi)))(f^{[j]}) & = (\sigma^{[i]}(\varphi))(\lambda^jf^{[j]}) = \varphi(\lambda^j(f\circ \sigma)^{[j+ni]}) \\
            & = \varphi(\lambda^{j+ni}(f\circ (\lambda^{-i}\sigma))^{[j+ni]}) = (\alpha_\lambda(\varphi))((f\circ (\lambda^{-i}\sigma))^{[j+ni]}) \\
            & = (\lambda^{-i}\sigma^{[i]}(\alpha_\lambda(\varphi)))(f^{[j]}) \qedhere
        \end{align*}
\end{proof}

In particular, the equality $\nuw(V[k]) = \nuw(V)$ does \emph{not} hold when we consider these two spaces with their action of $\R^\times$. 

\begin{remark}
    If all the vertical weighted of for $V$ are non-negative, then the action of $\R^\times$ extends to a monoid action of $\R$. Using the language of~\cite{grabowski2009higher}, $\nuw(V)$ is a \emph{bigraded} bundle in this case. 
\end{remark}

\subsubsection{Weighted subbundles}

Recall that a subbundle $W\to R$ of a linearly weighted vector bundle $V\to M$ is called a weighted subbundle if there exist a weighted atlas of subbundle coordinates for $W$. In this case, it is explained in Section~\ref{subsection: weighted subbundles} that $W$ inherits a linear weighting. 

\begin{proposition}
\label{proposition: subbundles of weighted normal bundle}
    If $W\to R$ is a weighted subbundle of $V\to M$, then $\nuw(W)$ is a subbundle of $\nuw(V)$. 
\end{proposition}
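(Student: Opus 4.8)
The plan is to produce the inclusion $\nuw(W) \into \nuw(V)$ by dualizing the restriction map on polynomial functions, and then to verify the subbundle property by passing to weighted subbundle coordinates, where everything is governed by a monomial normal form.

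First I would observe that the induced weighting on $W$ is, by the formula in the preceding proposition, precisely a quotient filtration: the restriction-of-functions map $\rho : C^\infty_{pol}(V) \to C^\infty_{pol}(W)$, $f \mapsto f|_W$, is a surjective morphism of filtered algebras with $C^\infty_{pol}(W)_{(i)} = \rho(C^\infty_{pol}(V)_{(i)})$. Hence $\rho$ induces a surjection of graded algebras $\gr(\rho) : \gr(C^\infty_{pol}(V)) \onto \gr(C^\infty_{pol}(W))$, and applying the contravariant functor $\aHom(-,\R)$ yields an injective map $\iota : \nuw(W) \into \nuw(V)$. Because $\rho$ commutes with the pullbacks of $\pi$ and $\kappa_\lambda$ (restriction of a bundle map to an invariant subbundle), $\iota$ intertwines the bundle projections and the scalar multiplications $\nuw(\kappa_\lambda)$; in particular the image of $\iota$ is invariant under scalar multiplication. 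It then remains only to check that $\iota$ embeds $\nuw(W)$ as a submanifold cut out locally by the vanishing of a subset of vector bundle coordinates.

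This is a local question, so I would fix $w \in W$ and choose, by the definition of a weighted subbundle, weighted vector bundle coordinates $x_a \in C^\infty_{[0]}(V|_U)$, $y_b \in C^\infty_{[1]}(V|_U)$ in which $W$ is cut out by the vanishing of a subset of the coordinates, say $\{x_a : a \in I\}$ and $\{y_b : b \in J\}$. By the theorem on $\nuw(V)$, the leading symbols $x_a^{[w_a]}$ and $y_b^{[v_b]}$ are vector bundle coordinates on $\nuw(V|_U)$. The vanishing ideal $\van{W}$ inside $C^\infty_{pol}(V|_U)$ is generated by $\{x_a : a\in I\} \cup \{y_b : b \in J\}$, and each such generator is weighted-homogeneous, being a single monomial equal to its own leading symbol. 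I would use this to show that $\gr(C^\infty_{pol}(W|_{U\cap R}))$ is the quotient of $\gr(C^\infty_{pol}(V|_U))$ by the graded ideal generated by $\{x_a^{[w_a]} : a\in I\} \cup \{y_b^{[v_b]} : b\in J\}$, so that under $\iota$ the image $\nuw(W|_{U\cap R})$ is exactly the common zero locus of these coordinate functions, while the remaining symbols restrict to weighted vector bundle coordinates on $\nuw(W)$. This exhibits $\nuw(W)$ as a subbundle of $\nuw(V)$; tracking the $x$-coordinates shows its base is $\nuw(R, R\cap N)$, consistent with the induced weighting of $R$ along $R\cap N$.

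The main obstacle is the graded statement in the last step: one must verify that taking associated graded commutes with the quotient by $\van{W}$, i.e. that $\gr(\van{W} \cap C^\infty_{pol}(V)_{(i)})$ is exactly the degree-$i$ part of the leading-symbol ideal. This can fail for a general ideal, but here it holds because $\van{W}$ admits generators that are weighted-homogeneous monomials; I expect to confirm it directly in the monomial model, where the filtration of $\van{W}$ is spanned by monomials and the passage to $\gr$ introduces no cancellation. The remaining verifications, namely smoothness of $\iota$ and invariance under $\nuw(\kappa_\lambda)$, are then formal consequences of the functoriality already established for $\nuw$.
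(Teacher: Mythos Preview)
Your proposal is correct and follows essentially the same approach as the paper: choose weighted subbundle coordinates $x_a, y_b$ for $W$ and observe that their leading symbols $x_a^{[w_a]}, y_b^{[v_b]}$ serve as subbundle coordinates for $\nuw(W)$ inside $\nuw(V)$. The paper compresses this into a single sentence, whereas you additionally spell out the global inclusion $\iota$ via the surjection $\gr(\rho)$ and isolate the lemma that $\gr$ commutes with the quotient by $\van{W}$ because the generators are weighted-homogeneous monomials; these are exactly the details the paper leaves implicit.
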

\begin{proof}
    If $x_a, y_b$ are weighted subbundle coordinates for $W|_{R\cap U}$, then the homogeneous interpolations $x_a^{[w_a]}, y_b^{[v_b]} \in C^\infty(\nuw(V|_U))$ define subbundle coordinates for $\nuw(W|_{R\cap U})$. 
\end{proof}

Let $(V|_N)_i \to N$ be the filtration of $V|_N$ defined by the linear weighting (see Proposition~\ref{proposition: linear weighting determines filtration by subbundles}). Proposition~\ref{proposition: subbundles of weighted normal bundle} allows us to describe the weighted normal bundle of $V$ in terms of the associated graded bundle 
    \[ \gr(V|_N) = \bigoplus_i (V|_N)_{i}/(V|_N)_{i+1}. \]

\begin{corollary}
    Suppose that $V \to M$ is linearly weighted. If $\pi : \nuw(M,N)\to N$ denotes the graded bundle projection, then one has a (non-canonical) identification 
        \[ \nuw(V) \cong \pi^*\gr(V|_N).\]
\end{corollary}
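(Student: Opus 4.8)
The plan is to prove the corollary by assembling the local frame description of $\nuw(V)$ (from the theorem on weighted frames) together with the subbundle structure coming from the filtration $(V|_N)_i$. The key conceptual point is that the associated graded bundle $\gr(V|_N) = \bigoplus_i (V|_N)_i/(V|_N)_{i+1}$ is, by Proposition~\ref{proposition: linear weighting determines filtration by subbundles}, a graded vector bundle over $N$ whose rank in degree $i$ equals $\#\{a : v_a = i\}$, and whose total rank equals $\operatorname{rank} V$. So both $\nuw(V)$ and $\pi^*\gr(V|_N)$ are vector bundles over $\nuw(M,N)$ of the same rank $k$; the content is to produce a bundle isomorphism between them.

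First I would choose, for each point $p \in N$, a weighted vector bundle chart: an open $U \sset M$ with weighted coordinates $x_a$ and a weighted frame $\sigma_1, \dots, \sigma_k$ for $V|_U$, with vertical weights $v_1, \dots, v_k$. By the theorem on sections of the weighted normal bundle, the homogeneous approximations $\sigma_b^{[-v_b]}$ form a frame for $\nuw(V)$ over $\nuw(U, U\cap N)$. On the other side, the classes $\sigma_b|_N^{[v_b]} \in \gr(V|_N)$ (the images under Proposition~\ref{proposition: linear weighting determines filtration by subbundles}) form a frame for $\gr(V|_{N\cap U})$, and hence their pullbacks $\pi^*(\sigma_b|_N^{[v_b]})$ form a frame for $\pi^*\gr(V|_N)$ over $\nuw(U, U\cap N)$. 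The local isomorphism is then defined by matching these two frames, i.e. $\sigma_b^{[-v_b]} \mapsto \pi^*(\sigma_b|_N^{[v_b]})$.

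The main work is to show these local isomorphisms patch to a global (though non-canonical) isomorphism, or equivalently that the transition functions agree. The cleanest route is to verify that the $C^\infty(\nuw(M,N))$-module $\G(\nuw(V))$ and the module of sections of $\pi^*\gr(V|_N)$ are both canonically identified with $C^\infty(\nuw(M,N)) \otimes_{\gr(C^\infty(M))} \gr(\G(V))$. For $\nuw(V)$ this is exactly the displayed identity in the theorem on sections of the weighted normal bundle. For the pullback side, one uses that pullback of sections gives $\G(\pi^*\gr(V|_N)) = C^\infty(\nuw(M,N)) \otimes_{C^\infty(N)} \G(\gr(V|_N))$, and that $\G(\gr(V|_N)) = \gr(\G(V))\otimes_{\gr(C^\infty(M))} C^\infty(N)$ by Proposition~\ref{proposition: linear weighting determines filtration by subbundles}, combined with the fact that $C^\infty(\nuw(M,N))$ is already a $\gr(C^\infty(M))$-module through the graded bundle projection $\pi$. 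Reconciling the two tensor-product expressions is the heart of the argument.

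The hard part will be handling the source of non-canonicity precisely, namely that the identification depends on the choice of a splitting of the filtration $(V|_N)_i$ (equivalently, a choice of weighted frame lifting a graded frame), which is why the corollary is only asserted up to non-canonical isomorphism. Concretely, a different weighted frame $\sigma'_b$ with $\sigma'_b \equiv \sigma_b \pmod{\G(V)_{(-v_b+1)}}$ gives the same classes in $\gr$ but genuinely different homogeneous approximations in $\nuw(V)$ (as the rank-$2$ example earlier in the paper shows), so no natural frame exists globally. I would therefore present the isomorphism as induced by any choice of splitting $\gr(V|_N) \cong V|_N$ of wide subbundles over $N$, check that changing the splitting changes the isomorphism by an automorphism covering the identity on $\nuw(M,N)$, and conclude that the isomorphism class is well defined. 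The functoriality statement in the theorem on the weighted normal bundle can be invoked to keep the patching formal rather than coordinate-heavy.
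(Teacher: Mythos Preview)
The paper's argument is quite different from yours and much shorter. It first identifies the restriction $\nuw(V)|_N$ with $\gr(V|_N)$: since $V|_N$ is a weighted subbundle whose induced linear weighting is the filtration by the $(V|_N)_i$, one has $\nuw(V|_N)=\gr(V|_N)$ by Example~\ref{example: weighting defined by subbundles}, and by Proposition~\ref{proposition: subbundles of weighted normal bundle} this is a subbundle of $\nuw(V)|_N$ of the same rank, hence all of it. Then the paper simply invokes homotopy invariance of vector bundles: $\pi:\nuw(M,N)\to N$ is a smooth homotopy inverse of the inclusion $\iota:N\hookrightarrow\nuw(M,N)$, so $\nuw(V)\cong\pi^*\iota^*\nuw(V)=\pi^*\gr(V|_N)$. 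No frames, no patching; the non-canonicity is precisely that of the homotopy-invariance isomorphism.

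Your ``cleanest route'' has a genuine gap. You assert that the $\gr(C^\infty(M))$-module structure on $C^\infty(\nuw(M,N))$ arises through $\pi$, but it does not: the structure appearing in $\G(\nuw(V))=C^\infty(\nuw(M,N))\otimes_{\gr(C^\infty(M))}\gr(\G(V))$ comes from the inclusion $\gr(C^\infty(M))\hookrightarrow C^\infty(\nuw(M,N))$, under which classes $f^{[i]}$ with $i>0$ act nontrivially. The $C^\infty(N)$-module structure you use for $\G(\pi^*\gr(V|_N))$ is $\pi^*$; extending that to a $\gr(C^\infty(M))$-action via the quotient $\gr(C^\infty(M))\twoheadrightarrow C^\infty(N)$ forces every positive-degree element to act by zero. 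The two actions agree only in degree~$0$, so the base-change step needed to ``reconcile the two tensor-product expressions'' fails. Indeed, had it succeeded you would have produced a \emph{canonical} $C^\infty(\nuw(M,N))$-module isomorphism, hence a canonical bundle isomorphism, contradicting the very non-canonicity you set out to explain. Your fallback of choosing a splitting $\gr(V|_N)\cong V|_N$ does not repair this either: the rank-$2$ example in Section~\ref{section: linear weightings} shows the linear weighting is strictly more data than the filtration of $V|_N$, so a splitting of that filtration alone does not determine a global map $\nuw(V)\to\pi^*\gr(V|_N)$; you would still have to construct and patch local weighted frames compatible with the splitting, which is exactly the work the homotopy argument avoids.
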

\begin{proof}
    Recall that $V|_N$ is a weighted subbundle of $V$ and that the induced weighting of $V|_N$ is given by the filtration $(V|_N)_i$. In particular, this implies that 
        \[ \nuw(V|_N) = \gr(V|_N).  \]
    On the other hand $\nuw(V|_N)$ is a subbundle of $\nuw(V)|_N$ of equal rank, and so $\nuw(V|_N) = \nuw(V)|_N$. Since $\pi:\nuw(M, N)\to N$ is a smooth homotopy inverse of the inclusion $\iota:N\into \nuw(M,N)$ it follows that 
        \[ \nuw(V) \cong \pi^*\iota^*\nuw(V) = \pi^*(\nuw(V)|_N) = \pi^*\gr(V|_N), \]
    as claimed. 
\end{proof}

\begin{example}
    If $V\to M$ is given the trivial weighting along the subbundle $W\to R$, then this proposition amounts to the identification 
        \[ \nu(V,W) \cong \pi^*(V|_N/W \oplus W) \]
    as vector bundles over $\pi:\nu(M,N)\to N$. 
\end{example}

\section{The weighted deformation bundle}

Let $V\to M$ be a linearly weighted vector bundle. As explained above, the weighting of $M$ determines a weighted deformation space $\defw(M,N)$ which admits a set-theoretic decomposition 
    \[ \defw(M,N) = \nuw(M, N) \sqcup (M\times \R^\times).  \]
The purpose of this section is to explain how this construction carries over to the context of linearly weighted vector bundles, yielding a weighted deformation bundle $\defw(V)$ which is itself a vector bundle over $\defw(M,N)$. Since many of the proofs in this section are very similar to those in the previous section we will omit them for brevity. 

\subsection{Definitions}

Let $V\to (M,N)$ be a linearly weighted vector bundle, and let $\rees(C^\infty_{pol}(V))$ be the Rees algebra of the filtered algebra $C^\infty_{pol}(V)$,  
    \[ \rees(C^\infty_{pol}(V)) = \left\{ \sum_{i\in \Z} f_iz^{-i} : f_i \in C^\infty_{pol}(V)_{(i)} \right\} \sset C^\infty_{pol}(V)[z,z^{-1}].   \]

We define the weighted deformation bundle in the same way as Loizdes and Meinrenken, who were motivated by the work of Haj and Higson (cf.~\cite[Definition 3.1]{sadegh2018euler}), who in turn borrow from constructions in algebraic geometry. 

\begin{definition}
    The \emph{weighted deformation bundle} of the linearly weighted vector bundle $V\to (M,N)$ is the character spectrum
        \[ \defw(V) = \aHom(\rees(C^\infty_{pol}(V)), \R) \]
\end{definition}

Given $f\in C^\infty_{pol}(V)_{(i)}$, let $\WT{f}^{[i]}$ denote the function on $\defw(V)$ defined by 
    \[ \WT{f}^{[i]}(\varphi) = \varphi(fz^{-i}).  \]
Let $\pi_\delta = \WT{1}^{[-1]}: \defw(V) \to \R$; this is a surjection, inducing a set-theoretic decomposition
    \begin{equation}
    \label{equation: decomposition of deformation space}
        \defw(V) = \nuw(V) \sqcup (V\times \R^\times).
    \end{equation}
In terms of this decomposition we have, for any $f\in C^\infty_{pol}(V)_{(i)}$, that 
    \[ \WT{f}^{[i]}|_{\pi_\delta^{-1}(t)} = \left\{
        \begin{array}{ll}
            t^{-i}f & t\neq 0 \\
            f^{[i]} & t = 0.
        \end{array}
    \right.\]
Letting $\pi:V \to M$ and $\kappa_\lambda : V \to V$ be the vector bundle projection and scalar multiplication by $\lambda \in \R$, respectively, it follows that we get maps 
    \[ \defw(\pi) : \defw(V) \to \defw(M,N) \quad \text{and} \quad \defw(\kappa_\lambda) : \defw(V)\to \defw(V).  \]
The main point of this construction is that there is a natural $C^\infty$-vector bundle structure on $\defw(V)$, with bundle projection and scalar multiplication given by $\defw(\pi)$ and $\defw(\kappa)$ respectively, in such a way that each $\WT{f}^{[i]}$ is smooth and $\pi_\delta$ is a submersion (see~\cite[Theorem 5.1]{LOIZ-MEIN2020}).

\begin{theorem}
    The weighted deformation bundle has the unique structure of a $C^\infty$-vector bundle over $\defw(M,N)$ of rank equal to that of $V$ over $\defw(M,N)$ such that $\pi_\delta:\defw(V)\to \R$ is a surjective submersion and that for all $n\geq 0$ 
        \[\rees(C^\infty_{[n]}(V)) \sset C^\infty_{[n]}(\defw(V)). \]
    Given weighted vector bundle coordinates $x_a$ and $y_b$ on $V|_U$, the functions $\WT{x_a}^{[w_a]}$, $\WT{y_b}^{[v_b]}$ serve as vector bundle coordinates on $\defw(V|_U) = \defw(V)|_{\defw(U, U\cap N)}$. Moreover, this construction is functorial: any weighted vector bundle morphism $\varphi:V\to W$ between linearly weighted vector bundles defines a vector bundle morphism  $\defw(\varphi):\defw(V)\to \defw(W)$.
\end{theorem}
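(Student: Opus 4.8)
The plan is to mirror the construction of the weighted normal bundle $\nuw(V)$ given above, replacing the associated graded algebra $\gr(C^\infty_{pol}(V))$ throughout by the Rees algebra $\rees(C^\infty_{pol}(V))$, and to reduce the analytic content to the manifold case \cite[Theorem 5.1]{LOIZ-MEIN2020} by working in weighted vector bundle coordinates. Since $\defw(M,N)$ has already been constructed by Loizides and Meinrenken, the task is to exhibit $\defw(V)$ as a rank-$k$ vector bundle over it with the stated coordinate and functoriality properties, so the bulk of the genuinely new work is local and algebraic.

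First I would establish the local model. Fix weighted vector bundle coordinates $x_a, y_b$ on $V|_U$ as in Theorem~\ref{theorem: linear weightings in terms of polynomials}, so that $C^\infty_{pol}(V|_U)$ is the polynomial algebra $C^\infty(U)[y_1,\dots,y_k]$ equipped with the monomial filtration. A direct check shows that the Rees algebra of such a filtered polynomial algebra is again polynomial: $\rees(C^\infty_{pol}(V|_U))$ is freely generated as an algebra over $\rees(C^\infty(U))$ by the $k$ Rees generators $y_b z^{-v_b}$, whose induced functions on $\defw(V|_U)$ are the $\WT{y_b}^{[v_b]}$. Consequently a character $\varphi\in\defw(V|_U)$ is determined by its restriction to $\rees(C^\infty(U))$, giving a point of $\defw(U,U\cap N)$, together with its $k$ values on the generators; every such combination extends, yielding a bijection
\[ \defw(V|_U) \;\cong\; \defw(U, U\cap N)\times \R^k, \]
under which $\defw(\pi)$ is the projection and the $\WT{y_b}^{[v_b]}$ are the fibre coordinates.

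Next I would transport the smooth structure through this bijection, declaring each local model a trivial rank-$k$ bundle over $\defw(U,U\cap N)$, and verify the required properties. Any $f\in C^\infty_{pol}(V|_U)_{(i)}$ is, by the monomial normal form, a $C^\infty(U)$-linear combination of products of the generators, so in these coordinates $\WT{f}^{[i]}$ is a polynomial in the $\WT{y_b}^{[v_b]}$ whose coefficients are smooth functions pulled back from $\defw(U,U\cap N)$ --- here one invokes the smoothness of the base functions $\WT{g}^{[j]}$ supplied by \cite[Theorem 5.1]{LOIZ-MEIN2020} --- hence $\WT{f}^{[i]}$ is smooth. The map $\pi_\delta=\WT{1}^{[-1]}$ factors through the base submersion $\defw(U,U\cap N)\to\R$ and so is a submersion, while the homogeneity statement $\rees(C^\infty_{[n]}(V))\sset C^\infty_{[n]}(\defw(V))$ is immediate since the fibre coordinates are exactly the degree-one generators. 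On an overlap $U\cap U'$ a change of weighted frame is $C^\infty(M)$-linear with filtration-compatible coefficients, so expressing one family $\WT{y_b}^{[v_b]}$ in terms of the other exhibits the transitions as smooth, fibrewise-linear maps; the charts therefore glue to a global $C^\infty$-vector bundle structure on $\defw(V)$ over $\defw(M,N)$, unique because any structure making all $\WT{f}^{[i]}$ smooth must agree with it. Functoriality is then formal: a weighted morphism $\varphi$ gives $\varphi^*C^\infty_{pol}(W)_{(i)}\sset C^\infty_{pol}(V)_{(i)}$, hence a morphism of Rees algebras and, contravariantly, a map $\defw(\varphi):\defw(V)\to\defw(W)$ with $\defw(\varphi)^*\WT{f}^{[i]}=\WT{\varphi^*f}^{[i]}$, which is smooth and fibrewise linear in coordinates.

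I expect the only real subtlety, exactly as in the normal-bundle case and in \cite[Theorem 5.1]{LOIZ-MEIN2020}, to be the simultaneous control of the deformation parameter $z=\pi_\delta$, which degenerates the $t\neq 0$ copies of $V$ onto $\nuw(V)$ at $t=0$, and of the fibre directions --- that is, checking that the fibre coordinates $\WT{y_b}^{[v_b]}$ extend smoothly across $t=0$. This is precisely what the monomial normal form of Theorem~\ref{theorem: linear weightings in terms of polynomials} resolves: because $C^\infty_{pol}(V)$ is generated over $C^\infty(M)$ by the single degree-one layer $\G(V^*)$, the fibre dependence of every $\WT{f}^{[i]}$ is polynomial in the finitely many generators $\WT{y_b}^{[v_b]}$, adding only a manifestly smooth dependence on top of the base deformation space already constructed by Loizides and Meinrenken. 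This is what lets their argument carry over almost verbatim.
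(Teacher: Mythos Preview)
Your proposal is correct and is precisely the approach the paper intends: the paper omits the proof entirely, stating that the arguments are ``very similar to those in the previous section,'' i.e.\ the normal-bundle case combined with \cite[Theorem~5.1]{LOIZ-MEIN2020}, which is exactly the reduction you carry out. Your identification of the local model via the polynomial Rees algebra, the gluing via filtration-compatible frame changes, and the formal functoriality argument all match what the omitted proof would contain.
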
  

The vector bundle projection is given by $\defw(\pi)$ and scalar multiplication by $\lambda \in \R$ is given by $\defw(\kappa_\lambda)$. 

\begin{example}
    If $(M,N)$ is a weighted pair and $TM$ is linearly weighted as in Example~\ref{example: tangent weighting}, then 
        \[ \defw(TM) = \bigcup_{t\in \R} T\defw(M,N)|_{t}. \]
\end{example}

\subsection{Sections of the weighted deformation bundle}

We now explain the relationship between the $\rees(C^\infty(M)$-module
    \[ \rees(\G(V)) = \left\{ \sum_{i \in \Z} \sigma_iz^{-i} : \sigma_i \in \G(V)_{(i)} \right\} \sset \G(V)[z,z^{-1}]  \]
and sections of $\defw(V)$, in a way that mirrors Section~\ref{subsection: sections of the normal bundle}.

\begin{definition}
    Given $\sigma \in \G(V)_{(i)}$, the \emph{$i$-th homogeneous interpolation} is the map $\WT{\sigma}^{[i]} : \defw(M,N) \to \defw(V)$ defined by 
        \begin{equation}
        \label{equation: homogeneous section definition}
            (\WT{\sigma}^{[i]}(\varphi))(fz^{-j}) = \varphi((f\circ \sigma)z^{-j-ni}),
        \end{equation}
    where $\varphi \in \defw(M,N)$ and $f \in C^\infty_{[n]}(V)_{(j)}$.
\end{definition}

\begin{lemma}
\label{lemma: rees modules gives sections of weighted deformation bundle}
    For $\sigma \in \G(V)_{(i)}$, the $i$-th homogeneous interpolation is a smooth section of $\defw(V)$ such that, for any $f\in C^\infty_{[n]}(V)_{(j)}$ and $g\in C^\infty(M)_{(k)}$, one has 
        \begin{equation}
        \label{equation: homogeneous interpolation properties}
            \WT{f}^{[j]}\circ \WT{\sigma}^{[i]} = \WT{f\circ \sigma}^{[j+ni]} \in C^\infty(\defw(M,N)) \quad \text{and} \quad \WT{g}^{[k]}\WT{\sigma}^{[i]} = \WT{g\sigma}^{[i+k]} \in \G(\defw(V)).
        \end{equation}
\end{lemma}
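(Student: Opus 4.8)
The plan is to mirror the proof of Lemma~\ref{lemma: graded modules gives sections of weighted normal bundle}, since the Rees-algebra construction runs in parallel to the associated-graded one, with the deformation parameter $z$ replacing the passage to homogeneous classes. First I would check that $\WT{\sigma}^{[i]}$ is well-defined as a character on $\rees(C^\infty_{pol}(V))$: the defining formula~\eqref{equation: homogeneous section definition} must respect the multiplicative and additive structure of the Rees algebra. The key point is that for $\sigma \in \G(V)_{(i)}$ and $f\in C^\infty_{[n]}(V)_{(j)}$ one has $f\circ \sigma \in C^\infty(M)_{(j+ni)}$ by the module-filtration property, so $(f\circ \sigma)z^{-j-ni}$ genuinely lies in $\rees(C^\infty(M))$ and the right-hand side of~\eqref{equation: homogeneous section definition} makes sense. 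Multiplicativity $\WT{\sigma}^{[i]}(\varphi)(f_1 f_2 z^{-j_1-j_2}) = \WT{\sigma}^{[i]}(\varphi)(f_1 z^{-j_1})\WT{\sigma}^{[i]}(\varphi)(f_2 z^{-j_2})$ reduces to $(f_1 f_2)\circ \sigma = (f_1\circ\sigma)(f_2\circ\sigma)$ together with the fact that $\varphi$ is itself an algebra homomorphism.

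Next I would verify the two identities in~\eqref{equation: homogeneous interpolation properties}. The first, $\WT{f}^{[j]}\circ \WT{\sigma}^{[i]} = \WT{f\circ \sigma}^{[j+ni]}$, is essentially a restatement of~\eqref{equation: homogeneous section definition}: evaluating the left side at $\varphi$ gives $(\WT{\sigma}^{[i]}(\varphi))(fz^{-j}) = \varphi((f\circ\sigma)z^{-j-ni}) = \WT{f\circ\sigma}^{[j+ni]}(\varphi)$. Smoothness of $\WT{\sigma}^{[i]}$ then follows from this identity, since by the previous theorem the functions $\WT{f}^{[j]}$ are smooth on $\defw(M,N)$ and generate enough structure to detect smoothness of a map into $\defw(V)$; composing a candidate section with these coordinate functions yields smooth functions, exactly as in the normal-bundle case. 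The second identity $\WT{g}^{[k]}\WT{\sigma}^{[i]} = \WT{g\sigma}^{[i+k]}$ I would prove by the same chain of manipulations used in Lemma~\ref{lemma: graded modules gives sections of weighted normal bundle}, exploiting that scalar multiplication $\kappa_\lambda$ acts on a degree-$n$ homogeneous function by $\lambda^n$, so that the factor $\varphi(\WT{g}^{[k]})$ appearing from the module action gets absorbed as $g^n$ inside $f\circ(g\sigma)$ via homogeneity of $f$.

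Finally I would confirm that $\WT{\sigma}^{[i]}$ is a section, i.e.\ $\defw(\pi)\circ \WT{\sigma}^{[i]} = \id_{\defw(M,N)}$, which follows from $\defw(\pi\circ\sigma) = \defw(\id_M)$ by functoriality of the deformation-bundle construction. I would omit the explicit verification that the construction depends only on the Rees-class of $\sigma$ in the sense needed, or rather note that, unlike the graded case, no quotient is taken here so there is nothing to check on that front; the map $\rees(\G(V)) \to \G(\defw(V))$ is defined directly. The main obstacle I anticipate is purely bookkeeping: keeping the degree shifts $j \mapsto j+ni$ and the powers of $z$ consistent throughout, particularly in the computation establishing the second identity, where the interplay between the homogeneity degree $n$, the filtration shift $k$, and the Rees exponent must be tracked carefully. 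Since the statement is an exact analogue of the already-proven normal-bundle lemma, and the paper explicitly says proofs in this section are omitted for brevity, I expect the argument to be a faithful transcription with $z^{-\ell}$ in place of the superscript $[\ell]$ throughout.
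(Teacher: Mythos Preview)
Your proposal is correct and matches the paper's approach: the paper explicitly omits this proof, stating that the arguments in the deformation-bundle section parallel those in the normal-bundle section, and your transcription of the proof of Lemma~\ref{lemma: graded modules gives sections of weighted normal bundle} with $z^{-\ell}$ replacing the graded superscripts $[\ell]$ is exactly what is intended. Your observation that no well-definedness check is needed (since no quotient is taken in the Rees construction) is also apt.
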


As was the case for the weighted normal bundle, this yields an inclusion $\rees(\G(V)) \sset \G(\defw(V))$. In terms of the decomposition~\ref{equation: decomposition of deformation space} one finds, using the relations~\ref{equation: homogeneous interpolation properties}, that if $\sigma \in \G(V)_{(i)}$ then 
    \begin{equation*}
        \WT{\sigma}^{[i]}|_{\pi_\delta^{-1}(t)} = \left\{
            \begin{array}{ll}
                t^{-i}\sigma & t \neq 0 \\
                \sigma^{[i]} & t = 0. 
            \end{array}
        \right. 
    \end{equation*}

Finally, we give the following desription of $\G(\defw(V))$. 
\begin{theorem}
    If $\sigma_b$ is a weighted frame for $V|_U$, then the homogeneous interpolations $\WT{\sigma}_b^{[-v_b]}$ define a frame for $\defw(V|_U) = \defw(V)|_{\defw(U, U\cap N)}$. In particular, we have that 
        \[ \G(\defw(V)) = C^\infty(\defw(M,N))\otimes_{\rees(C^\infty(M))}\rees(\G(V)).  \]
\end{theorem}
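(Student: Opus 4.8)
The plan is to transcribe the proof of the corresponding statement for $\nuw(V)$, replacing the associated graded algebra by the Rees algebra throughout and carrying along the extra variable $z$; the smoothness of each $\WT{\sigma}_b^{[-v_b]}$ is already supplied by Lemma~\ref{lemma: rees modules gives sections of weighted deformation bundle}, so it remains only to check that these sections form a frame. First I would establish linear independence. Let $y_a\in C^\infty_{[1]}(V|_U)_{(v_a)}$ be the linear coordinates dual to the frame $\sigma_a$. Applying the defining formula~\eqref{equation: homogeneous section definition} with $f = y_a$ (so $n=1$, $j = v_a$) and $i = -v_b$, one computes, for every character $\varphi \in \defw(M,N)$,
\[
    \bigl(\WT{\sigma}_b^{[-v_b]}(\varphi)\bigr)\bigl(y_a z^{-v_a}\bigr) = \varphi\bigl((y_a\circ\sigma_b)\,z^{-(v_a-v_b)}\bigr) = \delta_{ab},
\]
the final equality holding because $y_a\circ\sigma_b = \delta_{ab}$ is a constant function: the argument vanishes when $a\neq b$, while for $a = b$ it is $1\cdot z^{0}$ and any character sends the unit to $1$. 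Hence the $\WT{\sigma}_b^{[-v_b]}(\varphi)$ are linearly independent in the fibre of $\defw(V|_U)$ over each $\varphi$, uniformly across both strata $t=0$ and $t\neq 0$ of $\pi_\delta$.

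For spanning, the key input is that $\rees(C^\infty_{pol}(V|_U))$ is generated as an algebra over $\rees(C^\infty(U))$ by the elements $\WT{y_b}^{[v_b]} = y_b z^{-v_b}$; this is the Rees-algebra analogue of the generation statement used in the proof for $\nuw(V)$, and follows from the local model of Theorem~\ref{theorem: linear weightings in terms of polynomials} by writing each monomial $x^\alpha y^\beta z^{-i}$ as a product of powers of the $\WT{y_b}^{[v_b]}$ with an element of $\rees(C^\infty(U))$. Consequently a character $\varphi \in \defw(V|_U)$ is determined by its restriction $\varphi_0 = \defw(\pi)(\varphi)$ to $\rees(C^\infty(U))$ together with the scalars $\varphi(y_b z^{-v_b})$, and the relations of Lemma~\ref{lemma: rees modules gives sections of weighted deformation bundle} give directly
\[
    \varphi = \sum_b \varphi\bigl(y_b z^{-v_b}\bigr)\, \WT{\sigma}_b^{[-v_b]}(\varphi_0).
\]
Thus the $\WT{\sigma}_b^{[-v_b]}$ span each fibre and so form a frame.

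It remains to upgrade the local frames to the global tensor-product identity. The assignment $\sigma z^{-i}\mapsto \WT{\sigma}^{[i]}$, for $\sigma\in\G(V)_{(i)}$, is compatible with the module relations of Lemma~\ref{lemma: rees modules gives sections of weighted deformation bundle} and with the algebra homomorphism $\rees(C^\infty(M))\to C^\infty(\defw(M,N))$, $fz^{-i}\mapsto \WT{f}^{[i]}$, so it extends to a $C^\infty(\defw(M,N))$-module map
\[
    C^\infty(\defw(M,N)) \otimes_{\rees(C^\infty(M))} \rees(\G(V)) \Too \G(\defw(V)).
\]
Over a trivializing $U$ the elements $\sigma_b z^{v_b}$ freely generate $\rees(\G(V))$ as a $\rees(C^\infty(M))$-module, by the same local model, so the left-hand side is free over $C^\infty(\defw(M,N))$ on their images $\WT{\sigma}_b^{[-v_b]}$; since the frame statement just proved identifies these with a basis of $\G(\defw(V))|_U$, the map is an isomorphism locally, and hence globally.

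I expect the main obstacle to be the generation step: making precise that $\rees(C^\infty_{pol}(V|_U))$ is generated over $\rees(C^\infty(U))$ by the degree-one elements $y_b z^{-v_b}$ while correctly bookkeeping the powers of $z$, which---unlike the graded elements appearing in the $\nuw(V)$ argument---are honest algebra elements and are not all invertible in $\rees(C^\infty(U))$. Once this is settled the remaining verifications are formal and parallel the $\nuw(V)$ case essentially verbatim.
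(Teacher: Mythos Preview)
Your proposal is correct and is precisely the approach the paper intends: the paper in fact omits this proof entirely, stating that the arguments in the deformation-bundle section ``are very similar to those in the previous section'' and leaving the reader to transcribe the $\nuw(V)$ proof to the Rees setting, which is exactly what you have done. Your added justification of the tensor-product identity and your flagging of the generation step for $\rees(C^\infty_{pol}(V|_U))$ go slightly beyond what the paper spells out even in the $\nuw(V)$ case, but both are straightforward from the local model as you indicate.
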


\subsection{Properties of the weighted deformation bundle}

\subsubsection{Zoom action of $\R^\times$}

The action of $\R^\times$ on $\nuw(V)$ extends smoothly to a linear action on $\R^\times$ such for any $f\in C^\infty_{pol}(V)_{(i)}$ the homogeneous interpolation $\WT{f}^{[i]} \in C^\infty(\defw(V))$ is homogeneous of degree $i$. On the open dense set $V\times \R^\times \sset \defw(V)$, this action is given by 
    \begin{equation*}
        \alpha_\lambda(v,t) = (v, \lambda^{-1}t)
    \end{equation*}
It is related to the scalar multiplication on $\defw(V)$ by the following proposition.

\begin{proposition}
    For any $\lambda \in \R^\times$ and $\sigma \in \G(V)_{(i)}$, the following diagram commutes
        \begin{equation*}
        \xymatrix{
            \defw(V) \ar[r]^{\alpha_\lambda} & \defw(V) \\
            \defw(M,N) \ar[u]^{\WT{\sigma}^{[i]}} \ar[r]_{\alpha_{\lambda}} & \defw(M,N) \ar[u]_{\lambda^{-i}\WT{\sigma}^{[i]}}
        }
        \end{equation*}
\end{proposition}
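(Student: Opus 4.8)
The plan is to adapt, essentially verbatim, the computation proving the corresponding statement for the weighted normal bundle, replacing homogeneous approximations $\sigma^{[i]}$ by the homogeneous interpolations $\WT{\sigma}^{[i]}$ and the graded algebra by the Rees algebra $\rees(C^\infty_{pol}(V))$. Both vertical arrows in the diagram are sections of $\defw(V)$ and both horizontal arrows are diffeomorphisms, so commutativity amounts to the single identity of sections
\[
\alpha_\lambda\circ\WT{\sigma}^{[i]} = (\lambda^{-i}\WT{\sigma}^{[i]})\circ\alpha_\lambda \colon \defw(M,N)\to\defw(V).
\]
To verify this I would fix $\varphi\in\defw(M,N)$ and check that the two characters of $\rees(C^\infty_{pol}(V))$ produced by the two composites agree. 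Since these are $\R$-algebra homomorphisms and every element of the Rees algebra is a finite sum of terms $fz^{-j}$ with $f\in C^\infty_{[n]}(V)_{(j)}$, it suffices to evaluate on such a generator.

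First I would record the two ingredients. By the defining property of the zoom action, $\WT{h}^{[k]}$ is homogeneous of degree $k$ for any $h\in C^\infty_{pol}(V)_{(k)}$; unwinding this at the level of characters gives $(\alpha_\lambda(\psi))(hz^{-k}) = \lambda^k\,\psi(hz^{-k})$ for all $\psi\in\defw(V)$, and the identical identity holds on $\defw(M,N)$. Second, I would rewrite the right vertical arrow: the section $\lambda^{-i}\WT{\sigma}^{[i]}$ is fibrewise scalar multiplication of $\WT{\sigma}^{[i]}$ by the constant $\lambda^{-i}\in C^\infty(M)_{(0)}$, so the module relation of Lemma~\ref{lemma: rees modules gives sections of weighted deformation bundle} (namely $\WT{g}^{[k]}\WT{\sigma}^{[i]} = \WT{g\sigma}^{[i+k]}$ with $g=\lambda^{-i}$, $k=0$) identifies it with $\WT{\lambda^{-i}\sigma}^{[i]}$.

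With these in hand the argument is a short chain of equalities. On the left, the homogeneity identity on $\defw(V)$ followed by the definition~\eqref{equation: homogeneous section definition} gives
\[
(\alpha_\lambda(\WT{\sigma}^{[i]}(\varphi)))(fz^{-j}) = \lambda^j(\WT{\sigma}^{[i]}(\varphi))(fz^{-j}) = \lambda^j\varphi((f\circ\sigma)z^{-j-ni}).
\]
On the right, using the identification above together with~\eqref{equation: homogeneous section definition}, the fibrewise homogeneity $f\circ(\lambda^{-i}\sigma) = \lambda^{-ni}(f\circ\sigma)$ (valid since $f$ has fibre-degree $n$), and finally the homogeneity identity on $\defw(M,N)$ applied with filtration degree $j+ni$, I obtain
\[
(\WT{\lambda^{-i}\sigma}^{[i]}(\alpha_\lambda(\varphi)))(fz^{-j}) = \lambda^{-ni}(\alpha_\lambda(\varphi))((f\circ\sigma)z^{-j-ni}) = \lambda^{-ni}\lambda^{j+ni}\varphi((f\circ\sigma)z^{-j-ni}).
\]
The powers of $\lambda$ collapse to $\lambda^j$, matching the left-hand side.

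There is no genuine obstacle here; the only points needing care are bookkeeping. I must use the fibre-degree $n$, not the filtration degree $j$, when extracting the factor $\lambda^{-ni}$ from $f\circ(\lambda^{-i}\sigma)$, and I must apply the homogeneity identity with the correct filtration degree $j+ni$ of $f\circ\sigma\in C^\infty(M)_{(j+ni)}$, so that the two stray powers of $\lambda$ cancel exactly. As a cross-check, the relation can instead be read off directly on the open dense piece $V\times\R^\times$ of the decomposition~\eqref{equation: decomposition of deformation space}, where $\alpha_\lambda(v,t)=(v,\lambda^{-1}t)$ and $\WT{\sigma}^{[i]}|_{\pi_\delta^{-1}(t)}=t^{-i}\sigma$ make both composites equal to $(t^{-i}\sigma(m),\lambda^{-1}t)$; the general case then follows by continuity and density.
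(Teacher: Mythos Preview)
Your proposal is correct and follows exactly the approach the paper intends: the paper omits this proof explicitly, referring back to the analogous computation for the weighted normal bundle, and your argument is precisely the Rees-algebra translation of that computation (with the same chain of equalities, replacing $f^{[j]}$ by $fz^{-j}$ and $\nuw$ by $\defw$). The additional density cross-check on $V\times\R^\times$ is a nice sanity check but not needed.
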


\begin{example}
    If $V =  V_{-r}\oplus \cdots \oplus V_{-1}$ is a graded vector bundle, then $\defw(V) = V \times \R \to M\times \R$ and the zoom action is given by 
        \[ \alpha_\lambda(v_{-r}, \dots, v_{-1}, t) = (\lambda^rv_{-r}, \dots, \lambda v, \lambda^{-1}t). \]
\end{example}

\section{Differential operators on weighted vector bundles}

\subsection{Weighted order}

Recall that a weighting of $M$ along $N$ determines a filtration of ${\rm DO}(M)$, where $D \in {\rm DO}(M)_{(q)}$ if and only if $D$ maps $C^\infty(M)_{(i)}$ into $C^\infty(M)_{(i+q)}$ for all $i+q \geq 1$. With respect to this filtration, one can recover the weighting as 
    \begin{equation}
    \label{equation: canonical form of a weighting}
        C^\infty(M)_{(i)} = \{ f\in C^\infty(M) : q < i,\ D\in {\rm DO}(M)_{(-q)} \implies Df|_N = 0 \}.
    \end{equation}
Similarly, if $V \to M$ is a linearly weighted vector bundle and $\DO(V)$ denotes the sheaf of linear differential operators acting on sections of $V$, the linear weighting of $V$ determines an algebra filtration of $\DO(V)$
    \[ \cdots \supseteq \DO(V)_{(q)} \supseteq \DO(V)_{(q+1)} \supseteq \cdots  \]
where
    \[ \DO(V)_{(q)} = \{ D \in \DO(V) : \sigma \in \G(V)_{(i)} \implies D\sigma \in \G(V)_{(i+q)} \}. \]
    
Given the filtration of $V|_N$, let 
    \[ \G(V, (V|_N)_{i}) = \{ \sigma \in \G(V) : \sigma|_N \in \G((V|_N)_{i}). \} \]    
We can recover the linear weighting of $V$ from the filtration of $\DO(V)$ together with the filtration of $V|_N$. 
	
\begin{proposition}
    For a linear weighting of $V$ with induced filtration of $V|_N$ given by subbundles $(V|_N)_i$, then  
        \[ \G(V)_{(i)} = \{ \sigma \in \G(V): D\in \DO(V)_{(q)} \implies D\sigma \in \G(V, (V|_N)_{i+q}), \} \]
    where $\G(V, (V|_N)_i)$ denotes the sections of $V$ which restrict to sections of $(V|_N)_i$ over $N$. 
\end{proposition}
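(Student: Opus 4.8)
The plan is to prove the two inclusions separately. Write $S_i := \{ \sigma \in \G(V): D\in \DO(V)_{(q)} \implies D\sigma \in \G(V, (V|_N)_{i+q}) \}$ for the right-hand side; the goal is $\G(V)_{(i)} = S_i$. Since the statement is local (both sides are defined by conditions that can be checked on the members of an open cover, and differential operators are local), I would fix a point $p \in N$, pass to a chart $U$ with weighted vector bundle coordinates $x_a, y_b$ and a weighted frame $\sigma_1, \dots, \sigma_k$, and verify the equality of sheaves of sections over $U$. Away from $N$ both sides equal $\G(V)$, so the content is entirely near $N$.

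\textbf{The inclusion $\G(V)_{(i)} \subseteq S_i$.} This is the easy direction and follows immediately from the definitions. If $\sigma \in \G(V)_{(i)}$ and $D \in \DO(V)_{(q)}$, then by definition of the filtration on $\DO(V)$ we have $D\sigma \in \G(V)_{(i+q)}$. By Proposition~\ref{proposition: linear weighting determines filtration by subbundles}, the image of $\G(V)_{(i+q)}$ under restriction to $N$ lands in $\G((V|_N)_{i+q})$, so $D\sigma \in \G(V, (V|_N)_{i+q})$. Hence $\sigma \in S_i$.

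\textbf{The reverse inclusion $S_i \subseteq \G(V)_{(i)}$.} This is the substantive direction and is where I expect the main obstacle to lie; it is the direct analogue of the reconstruction formula~\eqref{equation: canonical form of a weighting} for weightings of $M$, now carried out at the level of sections. In local weighted coordinates, write $\sigma = \sum_b f_b \sigma_b$ with $f_b \in C^\infty(U)$; then $\sigma \in \G(V)_{(i)}$ is equivalent to $f_b \in C^\infty(U)_{(i - v_b)}$ for each $b$. The idea is to isolate each coefficient $f_b$ by applying a carefully chosen $D \in \DO(V)_{(q)}$ and reading off the $S_i$-condition. Concretely, I would build $D$ from a scalar operator $D_0 \in \DO(M)_{(-r)}$ composed with the frame projections: using the dual frame, the operator $\sigma \mapsto D_0(f_b)\,\sigma_b$ that extracts the $b$-th coefficient, applies $D_0$, and re-inserts $\sigma_b$ is a differential operator of weighted order $-r - v_b + v_b = -r$ plus the shift needed to land back in the right graded piece; tracking the vertical weights shows such composites lie in the appropriate $\DO(V)_{(q)}$. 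Applying these operators and demanding that the result restrict into $(V|_N)_{i+q}$ forces, via the scalar reconstruction identity~\eqref{equation: canonical form of a weighting} applied to each $f_b$, that $f_b \in C^\infty(U)_{(i-v_b)}$. The bookkeeping of how the vertical weights $v_b$ enter the order count, and verifying that the coefficient-extraction-and-reinsertion operators genuinely have the claimed weighted order (so that they are admissible test operators $D$), is the delicate part; once that is in place the result reduces to the already-known scalar statement~\eqref{equation: canonical form of a weighting}.
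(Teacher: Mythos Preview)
Your proposal is correct and follows the same route as the paper: reduce to the scalar reconstruction identity~\eqref{equation: canonical form of a weighting} by testing $\sigma$ against lifts of scalar differential operators in a weighted frame. The paper's only simplification is to use the diagonal lift $D\sigma = \sum_a (Df_a)\sigma_a$ of $D \in \DO(U)_{(q)}$, which manifestly lies in $\DO(V|_U)_{(q)}$ and sidesteps the vertical-weight bookkeeping you flagged as delicate; reading off the $a$-th coefficient of $(D\sigma)|_N$ then yields $Df_a|_N = 0$ whenever $v_a < i+q$, which is exactly the hypothesis of~\eqref{equation: canonical form of a weighting} for $f_a$.
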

\begin{proof}
    Suppose that $\sigma \in \G(V)_{(i)}$. By considering weighted frames, we note first of all that $\G(V)_{(i)} \sset \G(V,(V|_N)_i)$. Hence, by definition of weighted order of differential operators, if $D\in \DO(V)_{(q)}$ then $D\sigma \in \G(V)_{(i+q)} \sset \G(V, (V|_N)_{i+q})$ which proves one inclusion. 
		
    For the converse, suppose that $\sigma \in \G(V)$ has the property that $D \sigma \in \G(V, (V|_N)_{i+q})$ for all $D\in \DO(V)_{(q)}$. Choose a weighted frame $\sigma_a \in \G(V|_U)$ and write $\sigma = \sum_a f_a\sigma_a$ on $U$. Let $D\in \DO(U)_{(q)}$; the local lift of $D$ to $\G(V)$ given by 
	\[ D\sigma = \sum_a(Df_a)\sigma_a \]
    defines an inclusion $\DO(U)_{(q)} \sset \DO(V|_U)_{(q)}$, hence 
        \[ D\sigma \in \G(V|_U, (V|_{N\cap U})_{i+q}). \]
    In particular, implies that $Df_a|_N = 0$ for $v_a < i+q$. Stated differently, this says that $Df_a|_N = 0$ for all $D \in \DO(U)_{(q)}$ with $q < i-v_a$, hence $f_a \in C^\infty(U)_{(i-v_a)}$. Therefore, $\sigma \in \G(V)_{(i)}$ which completes the proof.
\end{proof}

\subsection{Weighted linearization}

Let $\gr(\DO(V))$ the associated graded algebra of the filtered algebra $\DO(V)$ with graded component 
    \[ \gr(\DO(V))_q = \DO(V)_{(q)}/\DO(V)_{(q+1)}.  \]
Given $D \in \DO(V)_{(q)}$, let $D^{[q]}$ denote its class in $\gr(\DO(V)_{q}$. This defines a differential operator on $\G(\nuw(V))$ specified by the relation
    \[ D^{[q]}\sigma^{[i]} = (D\sigma)^{[i+q]}, \]
where $\sigma \in \G(V)_{(i)}$. 

\begin{definition}
    The \emph{weighted linearization} of $D \in \DO(V)_{(q)}$ is $D^{[q]} \in \DO(\nuw(V))$. 
\end{definition}

\subsection{Interpolation of differential operators}

Any $D\in \DO(V)_{(q)}$ defines a differential operator $\WT{D}^{[q]}$ on $\defw(V)$ specified by 
    \[ \WT{D}^{[q]}\WT{\sigma}^{[i]} = \WT{(D\sigma)}^{[i+q]}.  \]
In terms of the decomposition $\defw(V) = \nuw(V)\sqcup (V\times \R^\times)$, we have that 
    \[ \WT{D}^{[q]}|_t = \left\{
        \begin{array}{cc}
            t^{-q}D & t \neq 0  \\
            D^{[q]} & t = 0. 
        \end{array}
    \right.\]
Thus, the operator $\WT{D}^{[q]}$ interpolates between the operator $D$ and its weighted symbol.

\section{The rescaled spinor bundle and \v{S}evera's algebroid}
\label{section: examples in the literature}

\subsection{The rescaled spinor bundle}

The work in this section is just a translation of the work in~\cite{higson2019spinors} to the language of linear weightings. 

Let $M$ be an even dimensional spin Riemannian manifold with spinor bundle $S\to M$ and Clifford connection $\nabla$ on $S$. Higson and Yi define a filtration of differential operators acting on sections of $S$ by declaring that Clifford multiplication and covariant differentiation have order $-1$. Thus, $D\in {\rm DO}(S)_{(-q)}$ if and only if it can be locally expressed as a sum of terms of the form 
    \[ fD_1\cdots D_q \]
where $f\in C^\infty(M)$ and each $D_i$ is either a covariant derivative $\nabla_X$, Clifford multiplication $c(X)$, or the identity operator (cf.~\cite[Definition 3.3.1]{higson2019spinors}). Higson and Yi say that $D$ has \emph{Getzler order} $-q$ if $D\in \DO(S)_{(-q)}$.

The Getzler filtration of $\DO(S)$ determines a filtration of $\G(S\boxtimes S^*)$ as follows. Recall that $\Cl(TM)$ is naturally filtered 
    \begin{equation}
    \label{equation: filtration of Clifford algebra}
        \Cl(TM) = \Cl_{-\dim(TM)}(TM) \supseteq \cdots \supseteq \Cl_0(TM) = \C,
    \end{equation}
hence $S\boxtimes S^*|_M = S\otimes S^* \cong\Cl(TM)$ is naturally filtered. Define 
    \begin{equation}
    \label{equation: getzler weighting}
        \G(S\boxtimes S^*)_{(i)} = \{ \sigma \in \G(S\boxtimes S^*) : D\in \DO(S)_{(-q)} \implies D\sigma \in \G(S\boxtimes S^*, \Cl_{i-q}(TM)) \},
    \end{equation}
where we are using the identification $\Cl(TM) = S\boxtimes S^*|_{M}$.

\begin{theorem}(\cite[Lemma 3.4.10]{higson2019spinors}).
    If $M\times M$ is given the trivial weighting along the diagonal then the filtration~\eqref{equation: getzler weighting} defines a linear weighting of $S\boxtimes S^*$ such that the induced filtration of $S\boxtimes S^*|_M$ is given by~\eqref{equation: filtration of Clifford algebra}. 
\end{theorem}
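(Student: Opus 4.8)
The plan is to verify the claim by exhibiting explicit weighted vector bundle coordinates in the sense of Theorem~\ref{theorem: linear weightings in terms of polynomials}, and then checking that the induced filtration of $S\boxtimes S^*|_M$ agrees with the Clifford filtration~\eqref{equation: filtration of Clifford algebra}. The weighting on $M\times M$ is the trivial weighting along the diagonal, so in normal coordinates centred along the diagonal the weights $w_a$ are all equal to $1$; these will play the role of the base weights, while the Getzler filtration is what must be translated into a vertical weight vector. Since the statement is local and the key input, that the Getzler filtration is the stated order-of-vanishing plus Clifford-filtration condition, is already established in~\cite[Lemma 3.4.10]{higson2019spinors}, the proof is essentially a dictionary between the two formalisms.

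First I would fix a point $m\in M$ and choose synchronous (geodesic normal) coordinates $x_1,\dots,x_n$ centred at $m$ together with a synchronous orthonormal frame for $S$ obtained by parallel transport along radial geodesics; this trivializes $S$ near $m$ and hence trivializes $S\boxtimes S^*$ near $(m,m)$. The fibre $S\otimes S^*\cong \End(S)\cong\Cl(TM)$ carries the Clifford filtration~\eqref{equation: filtration of Clifford algebra}, and the Clifford generators $c(e_{i_1})\cdots c(e_{i_k})$ give a frame of $S\boxtimes S^*$ whose members sit in filtration level $k$ of the Clifford filtration. I would declare the vertical weight of the frame section corresponding to a degree-$k$ Clifford element to be $v = -k$ (matching the indexing $\Cl_{-\dim}\supseteq\cdots\supseteq\Cl_0$, so that lower Clifford degree means higher filtration index $i$). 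With these $w_a=1$ and these $v_b$, the task is to show that the module $\G(S\boxtimes S^*)_{(i)}$ defined by~\eqref{equation: getzler weighting} is, in these coordinates, generated over $C^\infty(U)$ exactly by the monomials $x^\alpha y^\beta$ satisfying $\alpha\cdot w - \beta\cdot v\geq i$, i.e.\ (number of base derivatives) $+$ (Clifford degree) $\geq i$.

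The heart of the argument, and the step I expect to be the main obstacle, is matching the operator-theoretic definition~\eqref{equation: getzler weighting} with this monomial description. The point is that the defining condition involves all $D\in\DO(S)_{(-q)}$, and one must show that testing against the generators of the Getzler filtration, namely $C^\infty$-multiplication, the covariant derivatives $\nabla_{\partial/\partial x_a}$, and Clifford multiplications $c(e_b)$, suffices. Concretely, I would compute the action of these generators on the chosen frame: Clifford multiplication shifts Clifford degree by one, so it lowers the vertical weight by one and hence raises the Getzler order by one in agreement with its declared order $-1$; covariant differentiation in synchronous gauge agrees with ordinary differentiation plus lower-order connection terms whose Clifford degree is controlled, again consistent with order $-1$. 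Using Proposition~\ref{proposition: linear weighting determines filtration by subbundles} (which identifies the restriction to $N$ of $\G(V)_{(i)}$ with the wide subbundle of rank $\#\{b:v_b\geq i\}$), the vertical weights $v_b=-k$ force the induced filtration of $S\boxtimes S^*|_M$ to be precisely the subbundle spanned by Clifford elements of degree $\leq -i$, which is exactly $\Cl_{\text{(appropriate index)}}(TM)$ from~\eqref{equation: filtration of Clifford algebra}.

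Once the generator computation is in hand, the verification reduces to bookkeeping: an arbitrary $D\in\DO(S)_{(-q)}$ is a sum of products of $q$ generators, each of which either differentiates a coefficient (contributing to the $\alpha\cdot w$ count) or raises Clifford degree (contributing to the $-\beta\cdot v$ count), so $D\sigma$ lands in $\G(S\boxtimes S^*,\Cl_{i-q}(TM))$ iff $\sigma$ lies in the monomial-generated module with threshold $i$. This is exactly the condition $\alpha\cdot w - \beta\cdot v\geq i$ of Theorem~\ref{theorem: linear weightings in terms of polynomials}, so that theorem then certifies that~\eqref{equation: getzler weighting} is a bona fide linear weighting, and the restriction computation gives~\eqref{equation: filtration of Clifford algebra}. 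Since the underlying equality of filtrations is~\cite[Lemma 3.4.10]{higson2019spinors}, the only genuinely new content is translating it through the polynomial-functions characterization, so I would cite that lemma for the order-of-vanishing bookkeeping and spend the bulk of the written proof on the coordinate identification.
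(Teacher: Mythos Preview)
The paper does not give a proof of this theorem at all: it simply states the result with the attribution to \cite[Lemma~3.4.10]{higson2019spinors} and moves on. Your proposal is therefore strictly more than what the paper does, and the outline you give is essentially correct as a translation of Higson--Yi's lemma into the language of Theorem~\ref{theorem: linear weightings in terms of polynomials}: the synchronous frame of Clifford monomials $c(e_{i_1})\cdots c(e_{i_k})$ with vertical weight $-k$, together with normal coordinates to the diagonal carrying weight $1$, is exactly the local model required, and the Getzler-order bookkeeping you describe (trading $p$ covariant derivatives for a drop of $p$ in vanishing order and $q-p$ Clifford multiplications for a rise of $q-p$ in Clifford degree) reproduces the filtration condition $\alpha\cdot w - \beta\cdot v \ge i$.

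One point worth tightening if you write this out: the operators $D\in\DO(S)_{(-q)}$ act on sections of $S$, hence on $\G(S\boxtimes S^*)$ only through the first factor, so the relevant ``base'' coordinates in your monomial count are those transverse to the diagonal in the first $M$-direction; the second-factor coordinates are spectators for the operator action but still contribute to the weighting of $M\times M$. This does not change the conclusion, but the phrase ``number of base derivatives'' in your sketch should be read as order of vanishing along the diagonal rather than literal differentiation count in all $2n$ coordinates.
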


Applying the weighted deformation bundle to $S\boxtimes S^*$ with this linear weighting yields the rescaled spinor bundle. 

\begin{definition}(cf.~\cite[Section 3.4]{higson2019spinors}).
    The \emph{rescaled spinor bundle} is the weighted deformation bundle 
        \[ \mathbb{S} = \defw(S\boxtimes S^*) \to \T M. \]
\end{definition}

\begin{remark}
    Higson and Yi also show that there is a vector bundle morphism 
        \[ (\mathbb{S}\boxtimes \mathbb{S})|_{\T M^{(2)}} \to \mathbb{S} \]
    over groupoid composition on the tangent groupoid, making it into an associative algebroid. We intend to investigate this additional structure in a forthcoming note on multiplicative weightings for Lie groupoids. 
\end{remark}

\subsection{\v{S}evera's algebroid}

Let $V\to M$ be a rank $k$ vector bundle with inner product and assume that the principal ${\rm SO}(k)$-frame bundle admits a lift to a principal ${\rm Spin}(k)$-bundle $P$. Let $\Cl(\R^k)$ be the complexified Clifford algebra, and consider the action of ${\rm Pair}({\rm Spin}(k))$ on ${\rm Pair}(P) \times \Cl(\R^k)$ given by
    \[ (g_1, g_2).(f_1,f_2, v) = (f_1.g_1, f_2.g_2, g_1vg_2^{-1}).  \]

\begin{definition}[cf. \cite{vsevera2017letters}]
    If $V$ is a rank $k$ vector bundle equipped with inner product and spin structure, the \emph{Clifford algebroid} is the associated bundle
        \[ \mathscr{C}l(V) = {\rm Pair}(P)\times_{{\rm Pair}({\rm Spin}(k))}\Cl(\R^k) \to {\rm Pair}(M) \]
\end{definition}

We will show that there is a canonical linear weighting of $\mathscr{C}l(V)$, starting with a technical lemma. The proof of this lemma uses weighted paths so in order to keep our exposition short, we postpone the it to the appendix. 

\begin{lemma}
\label{lemma: action is weighted morphism}
    If ${\rm Pair}(P)$, ${\rm Pair}({\rm Spin}(k))$ are given the doubled trivial weighting along the diagonal, and $\Cl(\R^k)$ is given the linear weighting defined by its filtration by subspaces, then the group action 
        \begin{equation}
        \label{equation: severa action is weighted}
            {\rm Pair}({\rm Spin}(k)) \times {\rm Pair}(P) \times \Cl(\R^k) \to {\rm Pair}(P) \times \Cl(\R^k)
        \end{equation}
    is a weighted morphism.
\end{lemma}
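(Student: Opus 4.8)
The goal is to verify that the action map~\eqref{equation: severa action is weighted} is weighted, meaning its pullback preserves the filtration of polynomial functions. By the characterization of weighted morphisms given earlier in the paper, it suffices to check two things: that the base map on $\mathrm{Pair}(\mathrm{Spin}(k))\times\mathrm{Pair}(M)\to\mathrm{Pair}(M)$ is a weighted morphism of manifolds, and that the induced pullback on linear coordinate functions is filtration-preserving. The first of these is essentially the statement that the $\mathrm{Spin}(k)$-action descends to an $\mathrm{SO}(k)$-action on $M$ which fixes the diagonal setwise, together with the fact that the doubled trivial weighting along the diagonal is natural with respect to such maps; I would dispatch this by a direct local computation using weighted coordinates adapted to the diagonals.

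\textbf{Key steps.} First I would set up weighted coordinates: on $\mathrm{Pair}(P)=P\times P$ and $\mathrm{Pair}(M)=M\times M$ I use the trivial weighting along the diagonal, so that the ``difference'' coordinates (those vanishing on the diagonal) carry weight $1$ and the ``average'' coordinates carry weight $0$. On the Clifford factor $\Cl(\R^k)$, the linear weighting induced by the subspace filtration~\eqref{equation: filtration of Clifford algebra} assigns to a homogeneous Clifford element of filtration degree $-j$ a fibre coordinate of the appropriate weight. The heart of the argument is then to expand the conjugation action $(f_1,f_2,v)\mapsto(f_1.g_1,f_2.g_2,g_1 v g_2^{-1})$ and to track how the weighted degree of a monomial behaves under pullback. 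The crucial observation is that when $g_1$ and $g_2$ are close to the diagonal of $\mathrm{Spin}(k)$ — i.e.\ $g_1 g_2^{-1}$ is close to the identity — the conjugation $v\mapsto g_1 v g_2^{-1}$ differs from $v$ by terms whose Clifford filtration degree is strictly lower, weighted by difference coordinates that vanish on the diagonal and hence carry positive weight. This is precisely the mechanism by which the weight is preserved: a drop in Clifford filtration degree is compensated by the weight coming from the off-diagonal directions.

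\textbf{The main obstacle.} The delicate point is the interaction between the two filtrations in the conjugation term $g_1 v g_2^{-1}$. I would write $g_1 = h\exp(\xi_1)$ and $g_2 = h\exp(\xi_2)$ (or an analogous normal-form decomposition adapted to the diagonal), so that $g_1 v g_2^{-1} = h\exp(\xi_1)\,v\,\exp(-\xi_2)h^{-1}$, and expand the exponentials. Each bracket $[\xi, v]$ in the Clifford algebra lowers the filtration degree by at most $2$ relative to $v$, while each factor of $\xi_i$ (being a difference of spin elements) contributes at least the weight lost — and the precise bookkeeping that the total weighted degree $\alpha\cdot w - \beta\cdot v$ does not decrease is exactly what must be checked. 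This is where weighted paths enter: rather than expanding algebraically, one tests the pullback of a homogeneous function against weighted curves through the diagonal and measures the order of vanishing of the composite, which is cleaner than the direct monomial count. Since the paper explicitly defers this to the appendix precisely because the weighted-path argument is technical, I expect the verification of the weight inequality in the conjugation term to be the genuine content, with the base-map and frame-action parts being routine.
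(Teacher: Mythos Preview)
Your overall shape is right and matches the paper's strategy --- split into the base action and the Clifford conjugation, then control the latter via an exponential expansion (the paper does this by the weighted-paths criterion rather than the weighted-VB-morphism criterion, but these are equivalent). Two concrete gaps, though.

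First, the base of the vector bundle $\mathrm{Pair}(P)\times\Cl(\R^k)$ is $\mathrm{Pair}(P)$, not $\mathrm{Pair}(M)$; the base map is the principal right action $\mathrm{Pair}(\mathrm{Spin}(k))\times\mathrm{Pair}(P)\to\mathrm{Pair}(P)$, which certainly does not descend to an $\mathrm{SO}(k)$-action on $M$ (the principal action covers the identity on $M$). The paper deals with this by trivializing $\mathrm{Pair}(P)\cong \mathrm{Pair}(M)\times\mathrm{Pair}(\mathrm{Spin}(k))$ locally and invoking a short lemma that $(g_1,g_2)\mapsto g_1g_2^{-1}$ is weighted for the doubled weighting along the diagonal subgroup. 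Second, and more seriously, you assign weight $1$ to the difference coordinates, but the hypothesis is the \emph{doubled} trivial weighting, so they carry weight $2$. This is not cosmetic: in the expansion of $\exp(\xi_1)\,v\,\exp(-\xi_2)$ the relevant operation is left/right multiplication by $\xi_i\in\mathfrak{spin}(k)\subset\Cl_{-2}$ (not the commutator --- the bracket $[\xi,v]$ actually \emph{preserves} Clifford filtration, which is why the paper can first pull out the $\mathrm{Ad}_{g\exp(\xi t)}$ factor harmlessly). Each such multiplication moves $v$ from $\Cl_{-j}$ to $\Cl_{-j-2}$, a shift of $2$ in the fibre weight, so the compensating off-diagonal factor must contribute weight $2$. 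With weight $1$ your inequality ``each factor of $\xi_i$ contributes at least the weight lost'' is false and the map would not be filtration-preserving; the doubling is exactly what makes the bookkeeping close.
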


A consequence of this lemma is the following theorem. 

\begin{theorem}
    Let $\pi: {\rm Pair}(P)\times \Cl(\R^k) \to \mathscr{C}l(V)$ be the quotient map. Then 
        \[ C^\infty_{pol}(\mathscr{C}l(V))_{(i)} = \{ f \in C^\infty_{pol}(\mathscr{C}l(V)) : \pi^*f \in C^\infty_{pol}({\rm Pair}(P)\times \Cl(\R^k))_{(i)} \}  \] 
    defines a linear weighting of $\mathscr{C}l(V)$, where ${\rm Pair}(P)\times \Cl(\R^k)$ is given the product weighting. 
\end{theorem}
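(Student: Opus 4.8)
The plan is to exhibit the claimed subspaces $C^\infty_{pol}(\mathscr{C}l(V))_{(i)}$ as the image of a multiplicative filtration under the quotient map $\pi$, and then verify the local generation condition of Theorem~\ref{theorem: linear weightings in terms of polynomials} by producing explicit weighted vector bundle coordinates near an arbitrary point of $\mathscr{C}l(V)$. The key input is Lemma~\ref{lemma: action is weighted morphism}, which guarantees that the ${\rm Pair}({\rm Spin}(k))$-action used to form the associated bundle is a weighted morphism; this is what makes the definition of $C^\infty_{pol}(\mathscr{C}l(V))_{(i)}$ by pullback compatible with the quotient structure.

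\textbf{First} I would observe that, since $\pi$ is a surjective submersion (indeed a principal ${\rm Pair}({\rm Spin}(k))$-bundle quotient in the fibre direction combined with the frame-bundle quotient on the base), the pullback $\pi^* : C^\infty_{pol}(\mathscr{C}l(V)) \to C^\infty_{pol}({\rm Pair}(P)\times \Cl(\R^k))$ is injective with image exactly the ${\rm Pair}({\rm Spin}(k))$-invariant polynomial functions. The proposed filtration is therefore the restriction to invariants of the product filtration on ${\rm Pair}(P)\times\Cl(\R^k)$, pulled back through $\pi^*$. That the result is a \emph{multiplicative} filtration of $C^\infty_M$-modules is immediate, because $\pi^*$ is an algebra map and the product filtration is already multiplicative; the only subtlety is confirming the filtration is nontrivial and descends, and here Lemma~\ref{lemma: action is weighted morphism} is exactly what I need: the action being a weighted morphism means that the product filtration on ${\rm Pair}(P)\times\Cl(\R^k)$ is preserved by the action, so its restriction to invariants is again a genuine filtration (invariant elements stay invariant and retain their filtration degree).

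\textbf{The crux} is the local generation by monomials in weighted vector bundle coordinates. I would work near a point of $\mathscr{C}l(V)$ and choose a local section of the principal bundle $P\to M$ (hence a local trivialization of the frame bundle and of $P$) over a small open set $U$; this gives local weighted coordinates for ${\rm Pair}(P)$ adapted to the doubled trivial weighting along the diagonal, and these can be arranged so that the diagonal $N$ is cut out by the difference coordinates carrying weight one. In the $\Cl(\R^k)$ direction, the linear weighting by filtration degree supplies vertical coordinates $y_b$ whose weights $-v_b$ record the Clifford filtration. Pushing these invariant combinations down through $\pi$ produces candidate weighted vector bundle coordinates $x_a, y_b$ on $\mathscr{C}l(V)|_{U}$, and I would check directly that $C^\infty_{pol}(\mathscr{C}l(V)|_U)_{(i)}$ is generated over $C^\infty(U)$ by the monomials $x^s y^t$ with $s\cdot w - t\cdot v \geq i$, matching the hypothesis of Theorem~\ref{theorem: linear weightings in terms of polynomials}. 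The main obstacle is this last descent step: verifying that invariant polynomials of a given filtration degree on the total space are precisely spanned by the images of the coordinate monomials, rather than only being contained in that span. I expect this to follow from averaging over ${\rm Spin}(k)$ together with the fact that the filtration degree is ${\rm Pair}({\rm Spin}(k))$-invariant (again by Lemma~\ref{lemma: action is weighted morphism}), so that the invariant-projection of a weighted monomial is a weighted element of the same degree; but making the generation (as opposed to mere containment) precise is where the real work lies, and I would handle it by checking it fibrewise using the frame coordinates, where the computation reduces to ordinary invariant theory for the Clifford module.
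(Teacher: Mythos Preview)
Your approach is correct in spirit and tracks the paper's proof closely: both reduce to producing weighted vector bundle coordinates via a local trivialization of $P$, and both invoke Lemma~\ref{lemma: action is weighted morphism} as the key input. The paper, however, sidesteps the ``main obstacle'' you flag at the end. Rather than working with invariant functions and averaging, the paper simply notes that a local section of $P\to M$ over $U$ induces an isomorphism $\mathscr{C}l(V)|_U \cong U\times \Cl(\R^k)$ under which the quotient $\pi$ becomes, via the commutative diagram
\[
\xymatrix{
  {\rm Pair}(P)|_U \times \Cl(\R^k) \cong U\times {\rm Pair}({\rm Spin}(k)) \times \Cl(\R^k) \ar[dr] \ar[d]_{\pi} & \\
  \mathscr{C}l(V)|_U \ar[r]_\cong & U\times \Cl(\R^k),
}
\]
the composition of the action map with the projection. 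Since the action is a weighted morphism, the diagonal arrow is weighted, and hence the coordinates $x_a$ on $U$ and $y_b$ on $\Cl(\R^k)$ pull back through the bottom isomorphism to weighted vector bundle coordinates on $\mathscr{C}l(V)|_U$ directly. No averaging or invariant-theory computation is needed: the local section furnishes an explicit weighted splitting of $\pi$, so the descent is automatic rather than something to be checked fibrewise. What your longer route buys is a more explicit accounting of why the filtration is well-defined on invariants, but for the local-model verification the paper's diagram-chase is both shorter and avoids the generation-versus-containment issue you were concerned about.
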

\begin{proof}
    We have to find weighted vector bundle coordinates. Any point $p\in {\rm Pair}(M)$ is contained in an open neighbourhood $U\sset {\rm Pair}(M)$ such that ${\rm Pair}(P)|_U$ is isomorphic to $U\times {\rm Pair}({\rm Spin}(k))$ as weighted manifolds. Choosing $U$ small enough, we may assume that there exist weighted coordinates $x_a \in C^\infty(U)$. If $y_b$ are linear weighted coordinates on $\Cl(\R^k)$ then since the action map~\eqref{equation: severa action is weighted} is weighted and the following diagram commutes 
        \begin{equation}
        \xymatrix{
            {\rm Pair}(P)|_U \times \Cl(\R^k) \cong U\times {\rm Pair}({\rm Spin}(k)) \times \Cl(\R^k) \ar[dr] \ar[d]_{\pi} & \\
            \mathscr{C}l(V)|_U \ar[r]_\cong & U\times \Cl(\R^k),
        }
        \end{equation}  
    it follows that we can take $x_a, y_b$ to be our weighted vector bundle coordiantes on $\mathscr{C}l(V)|_U$. 
\end{proof}

\begin{definition}
    The \emph{\v{S}evera algebroid} is the weighted deformation bundle
        \[^\tau \mathscr{C}l(V) = \defw(V).  \]
\end{definition}

\appendix

\section{Proof of Lemma~\ref{lemma: action is weighted morphism}}

We now prove Lemma~\ref{lemma: action is weighted morphism}. As indicated above, the proof uses a characterization of weighted morphisms in terms of weighted paths. We being by explaining this characterization and then give a proof of the lemma. The contents of this section are based on communications with Gabriel Beiner, Yiannis Loizides, and Eckhard Meinrenken. 

\subsection{Weighted paths}

Let $(M,N)$ be a weighted pair. A path $\gamma:\R \to M$ is called a \emph{weighted path} if it is a weighted morphism for the trivial weighting of $\R$ along the origin. In particular, if $\gamma$ is a weighted path and $f \in C^\infty(M)_{(i)}$, then 
    \[ \gamma(0) \in N \quad \text{and} \quad f(\gamma(t)) = O(t^i).  \]
If $x_a$ is a system of weighted coordinates, with weight $w_a$, then $\gamma$ is of the form 
    \[ \gamma(t) = (x_1(t), \dots, x_n(t)) = (O(t^{w_1}), \dots, O(t^{w_n})). \]
    
It turns out that the weighting on $M$ can be characterized in terms of weighted paths. 

\begin{proposition}
\label{A-proposition: characterization of weighted morphisms}
    \begin{enumerate}
        \item We have $f \in C^\infty(M)_{(i)}$ if and only if  
            \[ f(\gamma(t)) =  O(t^i)\]
        for every weighted path $\gamma:\R \to M$.

        \item A smooth map $\phi:(M, N) \to (M', N')$ between weighted pairs is a weighted morphism if and only if it takes weighted paths to weighted paths. 
    \end{enumerate}
\end{proposition}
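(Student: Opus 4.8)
The plan is to establish (1) first and then deduce (2) almost formally. For the forward implication of (1) I would simply observe that a weighted path $\gamma$ is, by definition, a weighted morphism, so its pullback preserves the filtration; hence $f\in C^\infty(M)_{(i)}$ gives $f\circ\gamma\in C^\infty(\R)_{(i)}$, which for the trivial weighting of $\R$ along the origin says exactly that $f(\gamma(t))=O(t^i)$. Granting both directions of (1), statement (2) follows quickly. Its forward direction is immediate, since the composite of two filtration-preserving pullbacks is again filtration preserving, so if $\phi$ is weighted and $\gamma$ is a weighted path then $\phi\circ\gamma$ is a weighted path. For the converse in (2), take $g\in C^\infty(M')_{(i)}$ and an arbitrary weighted path $\gamma$ in $M$; then $(\phi^*g)(\gamma(t))=g\bigl((\phi\circ\gamma)(t)\bigr)$, and since $\phi\circ\gamma$ is a weighted path by hypothesis, the forward direction of (1) applied in $M'$ gives $g\bigl((\phi\circ\gamma)(t)\bigr)=O(t^i)$; the backward direction of (1) applied in $M$ then yields $\phi^*g\in C^\infty(M)_{(i)}$, i.e. $\phi$ is weighted.

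The real content is therefore the backward direction of (1), which is local and may be checked in a weighted coordinate chart $U$ with coordinates $x_1,\dots,x_m$ of weights $w_1,\dots,w_m$; after relabelling, the tangential coordinates (weight $0$) cut out $N\cap U$ and the normal ones satisfy $w_a\ge 1$. For a base point $q\in N\cap U$ and a vector $c$ in the normal directions, I would use the test path $\gamma_{q,c}(t)$ that keeps the tangential coordinates fixed at $q$ and sets the normal coordinates equal to $c_a t^{w_a}$; a direct check on the generating monomials $x^\alpha$ shows $\gamma_{q,c}$ is indeed a weighted path. Expanding $f$ to high order in the normal variables, with smooth coefficients in the tangential variables, $f=\sum_{|\alpha|\le L} f_\alpha\, x^\alpha + R_L$ where $\alpha$ runs over normal multi-indices and $R_L$ vanishes to order $L+1$ in the normal directions, one finds $f(\gamma_{q,c}(t))=\sum_\alpha f_\alpha(q)\,c^\alpha t^{\alpha\cdot w}+O(t^{L+1})$, so the coefficient of each $t^d$ is the polynomial $\sum_{\alpha\cdot w=d} f_\alpha(q)\,c^\alpha$ in $c$. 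The hypothesis $f(\gamma_{q,c}(t))=O(t^i)$ for all $c$ forces this polynomial to vanish whenever $d<i$, and since distinct multi-indices give linearly independent monomials in $c$, we conclude $f_\alpha(q)=0$ for every $\alpha$ with $\alpha\cdot w<i$; as $q$ ranges over $N\cap U$ this says that every normal Taylor coefficient of weighted degree $<i$ vanishes identically along $N$.

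It remains to convert this vanishing of weighted Taylor coefficients into genuine membership $f\in C^\infty(U)_{(i)}$, and here I would invoke the canonical description~\eqref{equation: canonical form of a weighting}: the vanishing of $f_\alpha$ along $N$ for all $\alpha\cdot w<i$ is precisely the assertion that $Df|_N=0$ for every $D\in\DO(U)$ that lowers the filtration degree by some $q<i$, since such operators are generated over $C^\infty(U)$ by the derivatives $\bd_x^\alpha$ with $\alpha\cdot w<i$ together with the weight-zero tangential derivatives, and tangential derivatives of functions vanishing identically on $N\cap U$ again vanish there. Thus $f\in C^\infty(U)_{(i)}$ locally, and a partition of unity subordinate to a cover by weighted charts promotes this to $f\in C^\infty(M)_{(i)}$. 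The main obstacle is exactly this last bridge: one must ensure that the purely asymptotic information read off along the one-parameter test paths (which probe only normal monomials) genuinely certifies membership in the monomial ideal, and controlling the smooth remainder $R_L$ uniformly in $q$ is what makes the order estimate legitimate. Everything else is either definitional or a formal consequence of (1).
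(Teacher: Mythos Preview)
Your argument is correct and follows essentially the same strategy as the paper: both use test paths of the form $t\mapsto(c_a t^{w_a})$ to probe the weighted Taylor expansion and conclude that the coefficients of weighted degree $<i$ vanish, and both deduce (2) from (1) exactly as you do. Two minor differences are worth noting. First, you are more careful than the paper about tangential (weight-zero) coordinates: you let the test path sit at an arbitrary base point $q\in N\cap U$ and allow the Taylor coefficients $f_\alpha$ to be smooth functions of the tangential variables, whereas the paper writes $f=\sum_{I\cdot w<i} c_I x^I + g$ with ostensibly constant $c_I$ and a single test path through the origin, which as written only covers the case where all weights are positive. Second, your closing detour through~\eqref{equation: canonical form of a weighting} is unnecessary: once you know $f_\alpha\equiv 0$ on $N\cap U$ for every normal multi-index with $\alpha\cdot w<i$, the Taylor formula with remainder already gives $f=\sum_{\alpha\cdot w\ge i,\,|\alpha|\le L} f_\alpha\,x_{nor}^\alpha + R_L$, and for $L\ge i-1$ every surviving term lies in $C^\infty(U)_{(i)}$ directly (each $x_{nor}^\alpha$ with $\alpha\cdot w\ge i$ is in the generating monomial ideal, and $R_L$ is a sum of terms $g_\beta x_{nor}^\beta$ with $|\beta|=L+1\ge i$). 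This is exactly how the paper closes the argument, and it avoids having to analyze the structure of $\DO(U)_{(-q)}$.
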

\begin{proof}
    \begin{enumerate}
        \item It remains to prove that a function $f\in C^\infty(M)$ with the property that $f(\gamma(t)) =  O(t^i)$ for every weighted path $\gamma :\R \to M$ has filtration degree $i$. 

    Let $p \in N$ and let $x_a$ be weighted coordinates defined on $U\sset M$. Consider the Taylor expansion of $f$ with respect to these coordinates:
        \[ f(x) = \sum_{I \cdot w < i} c_Ix^I + g(x) = \sum_{j=1}^{i-1}p_j(x) + g(x), \]
    where $I = (i_1, \dots, i_n)$ is a multi-index, $g\in C^\infty(M)_{(i)}$, and $p_j$ are weighted homogeneous of degree $j$. Let $\lambda = (\lambda_1, \dots, \lambda_n) \in \R^n$ and consider the weighted path $\gamma(t) = (\lambda_1t^{w_1}, \dots, \lambda_nt^{w_n})$. Using that the $p_j$ are weighted homogeneous of degree $j$, we have, by assumption, that 
        \[ f(\gamma(t)) = \sum_{j=0}^{i-1}p_j(\gamma(t))+g(\gamma(t)) = \sum_{j=0}^{i-1}t^jp_j(\lambda)+g(\gamma(t)) = O(t^i). \]
    Since $g(\gamma(t)) = O(t^i)$, this implies that $t^jp_j(\lambda) = O(t^i)$, whence $p_j(\lambda) = 0$ for all $\lambda \in \R^n$. Thus, $f = g \in C^\infty(M)_{(i)}$.

    \item Clearly a weighted morphism takes weighted paths to weighted paths, so it remains to prove the converse. 

    Suppose that $\phi$ takes weighted paths to weighted paths. Let $f \in C^\infty(M')_{(i)}$ and let $\gamma : \R \to M$ be a weighted path. Using the previous point, we have that 
        \[ (\phi^*f)(\gamma(t)) = f(\phi(\gamma(t))) = O(t^i), \]
    hence $\phi^*f \in C^\infty(M)_{(i)}$, since $\phi\circ \gamma : \R \to M'$ is a weighted path. \qedhere
    \end{enumerate}
\end{proof}

\subsection{Proof of Lemma~\ref{lemma: action is weighted morphism}}

Let us briefly recall the set up. Let $V \to M$ be a rank $k$ vector bundle with inner product and given spin structure. Let $P\to M$ be the principal ${\rm Spin}(k)$-bundle specified by the spin structure, and recall that the action of ${\rm Pair}({\rm Spin}(k))$ on ${\rm Pair}(P)\times \Cl(\R^k)$ is given by
    \[ (g_1, g_2).(p_1, p_2, v) = (p_1.g_1, p_2.g_2, g_1vg_2^{-1}), \]
where we are making use of the inclusion ${\rm Spin}(k) \sset \Cl(\R^k)$. In order to show that this action is weighted, we make use of weighted paths. 

The weighting on both ${\rm Pair}(P)$ and ${\rm Pair}({\rm Spin}(k))$ is the doubled weighting along the diagonal, where a function has filtration degree $2i$ if it vanishes to order $i$ along the diagonal. Giving ${\rm Pair}(M)$ the doubled weighting along the diagonal, one finds that ${\rm Pair}(P)$ can be identified locally with ${\rm Pair}(M)\times {\rm Pair}({\rm Spin}(k))$ as weighted manifolds. Thus, to show that the action of ${\rm Pair}({\rm Spin}(k))$ on ${\rm Pair}(P)$ is weighted, it is sufficient (in fact, equivalent) to show that the action of ${\rm Pair}({\rm Spin}(k))$ on itself is a weighted morphism. 

\begin{lemma}
    Suppose that $G$ is a Lie group and $H\sset G$ is a closed subgroup. If $G$ is given the doubled weighting along $H$, then the map 
        \begin{align*}
            a: G\times G & \to G \\
            (g_1, g_2) & \mapsto g_1g_2^{-1}
        \end{align*}
    is a weighted morphism. 
\end{lemma}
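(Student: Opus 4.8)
The plan is to use the characterization of weighted morphisms via weighted paths, established in Proposition~\ref{A-proposition: characterization of weighted morphisms}, so that I need only verify that $a(g_1,g_2) = g_1g_2^{-1}$ sends weighted paths in $G\times G$ to weighted paths in $G$. Since $G\times G$ carries the doubled weighting along $H\times H$ (the diagonal weighting being a special case), I first identify what a weighted path in $G\times G$ is: a path $t\mapsto (g_1(t), g_2(t))$ with $g_1(0), g_2(0) \in H$ such that every local weighted coordinate function vanishes to the appropriate order in $t$. Because the doubled weighting assigns filtration degree $2$ to functions vanishing to first order along $H$, the condition for being a weighted path is that the ``transverse'' displacement of each $g_a(t)$ away from $H$ is $O(t)$, while motion along $H$ is unconstrained.

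The key step is to reduce the group-theoretic problem to a statement about the Lie algebra using the exponential map. Choosing an $\mathrm{Ad}(H)$-invariant (or at least $H$-compatible) complement $\mathfrak{m}$ to $\mathfrak{h}$ in $\mathfrak{g}$, I would write each weighted path near $t=0$ as $g_a(t) = h_a(t)\exp(\xi_a(t))$ where $h_a(t)\in H$ is smooth, $h_a(0)=g_a(0)$, and $\xi_a(t)\in\mathfrak{m}$ with $\xi_a(t)=O(t)$; the transverse $O(t)$ condition is exactly what makes $t\mapsto g_a(t)$ a weighted path. Then I compute
\[
 a(g_1(t), g_2(t)) = h_1(t)\exp(\xi_1(t))\exp(-\xi_2(t))\,h_2(t)^{-1}.
\]
Using the Baker--Campbell--Hausdorff formula, $\exp(\xi_1(t))\exp(-\xi_2(t)) = \exp\bigl(\xi_1(t)-\xi_2(t) + O(t^2)\bigr)$, and since $\xi_1(t)-\xi_2(t) = O(t)$ its transverse part is again $O(t)$. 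Conjugating by $h_1(t)$ and multiplying by $h_2(t)^{-1}$ only moves the base point along $H$ and acts on the transverse displacement by a smooth ($t$-dependent) linear map, preserving the $O(t)$ estimate. Hence $a(g_1(t),g_2(t))$ starts on $H$ at $t=0$ and has transverse displacement $O(t)$, so it is a weighted path.

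The main obstacle I anticipate is the bookkeeping around the complement $\mathfrak{m}$: the decomposition $g = h\exp(\xi)$ is only local and requires a tubular-neighbourhood argument for $H\sset G$, and one must check that the ``transverse displacement is $O(t)$'' criterion is genuinely coordinate-independent and correctly captures the doubled weighting of degree $2$. In particular I should confirm that $a$ takes the basepoint into $H$ (here $a(h,h)=hh^{-1}=e\in H$, using that $H$ is a subgroup, which is where the hypothesis that $H$ is a \emph{subgroup} is essential) and that the conjugation step does not secretly increase the vanishing order in an unfavourable direction. Once the local normal form and the BCH estimate are in place, the conclusion via Proposition~\ref{A-proposition: characterization of weighted morphisms}(2) is immediate.
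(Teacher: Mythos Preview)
Your approach via weighted paths is sound in outline but contains an error in the order of vanishing. For the doubled trivial weighting along $H$, a coordinate transverse to $H$ has weight $2$; hence a weighted path $\gamma:\R\to G$ (with $\R$ trivially weighted along $0$) must have $x(\gamma(t)) = O(t^2)$ for each such coordinate, not $O(t)$. Your normal form should therefore read $\xi_a(t) = O(t^2)$, and the goal is to show that the transverse displacement of $a(g_1(t),g_2(t))$ is again $O(t^2)$. With that correction the BCH computation still goes through (the commutator terms are now $O(t^4)$). You should also drop the assumption of an $\mathrm{Ad}(H)$-invariant complement $\mathfrak{m}$, which need not exist for general $H\subseteq G$; it suffices to observe that any path of the form $h(t)\exp(\eta(t))$ with $h(t)\in H$ smooth and $\eta(t)\in\mathfrak{g}$ satisfying $\eta(t) = O(t^2)$ is a weighted path, regardless of where $\eta(t)$ sits in $\mathfrak{g}$.

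That said, the paper's proof is far shorter and uses none of this machinery. Because the doubled trivial weighting satisfies $C^\infty(G)_{(2i)} = \van{H}^i$ (and likewise $C^\infty(G\times G)_{(2i)} = \van{H\times H}^i$), and because $a^*$ is an algebra homomorphism, the weighted-morphism condition reduces to the single inclusion $a^*\van{H}\subseteq \van{H\times H}$. This is equivalent to $a(H\times H)\subseteq H$, i.e.\ $h_1h_2^{-1}\in H$ for $h_1,h_2\in H$, which is precisely the subgroup hypothesis. No exponential charts, tubular neighbourhoods, or BCH are needed; what your argument extracts by local Lie-theoretic analysis, the paper obtains in one line from the multiplicative structure of the vanishing ideal.
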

\begin{proof}
    By definition of the doubled weighting, the map $a$ is weighted if and only if $a^*\van{H} \sset \van{H\times H}$. From this it follows that $a$ is a weighted morphism, since $H$ is a subgroup.  
\end{proof}

It remains to show that the action of ${\rm Pair}({\rm Spin}(k))$ on $\Cl(\R^k)$ is weighted, which we accomplish by making use of weighted paths. It is sufficient to consider path $\gamma$ in ${\rm Pair}({\rm Spin}(k)) \times \Cl(\R^k)$ of the form 
    \[ \gamma(t) = \left(g\exp(\xi t)\exp(\xi_1(t)), g\exp(\xi t)\exp(\xi_2(t)), \sum_{i=0}^k c_it^i\right),  \]
where $g \in {\rm Spin}(k)$, $\xi \in \mathfrak{spin}(k)$, $\xi_1(t), \xi_2(t) = O(t^2)$, and $c_i \in \Cl_i(\R^k)$. Composing this with the action ${\rm Pair}({\rm Spin}(k))\times \Cl(\R^K)\to \Cl(\R^k)$ gives 
    \begin{equation}
    \label{A-equation: weighted path calculation}
        \sum_{i=0}^k {\rm Ad}_{g\exp(\xi t)}\left(\exp(\xi_1(t))c_i\exp(-\eta_2(t))t^i\right).
    \end{equation}
Using that the adjoint action of ${\rm Spin}(k)$ on $\Cl(\R^k)$ is filtration preserving, the identification $\mathfrak{spin}(k) = \Cl_2(\R^k)$, and the fact that $\xi_1(t), \xi_2(t) = O(t^2)$, it follows from the power series expansion of the exponential map that 
    \[ \sum_{i=0}^k {\rm Ad}_{g\exp(\xi t)}\left(\exp(\xi_1(t))c_i\exp(-\eta_2(t))t^i\right) = \sum_{i=0}^k v_i t^i + O(t^{k+1}), \]
for some $v_i \in \Cl_i(\R^k)$. In particular,~\eqref{A-equation: weighted path calculation} is a weighted path. By Proposition~\ref{A-proposition: characterization of weighted morphisms}, this completes the proof of Lemma~\ref{lemma: action is weighted morphism}.

\end{document}